\newtheorem*{maintheorem*}{Main Theorem}
\newtheorem{theorem}{Theorem}[section]
\newtheorem{prop}[theorem]{Proposition}
\newtheorem{lemma}[theorem]{Lemma}
\newtheorem{cor}[theorem]{Corollary}
\theoremstyle{definition}
\newtheorem{definition}[theorem]{Definition}
\newtheorem{example}[theorem]{Example}
\theoremstyle{remark}
\newtheorem{remark}[theorem]{Remark}
\numberwithin{equation}{section}
\newcommand{\Gal}{\mathrm{Gal}}
\newcommand{\PHNP}{\mathrm{PHNP}}
\newcommand{\HNP}{\mathrm{HNP}}
\DeclareFontFamily{U}{wncy}{}
\DeclareFontShape{U}{wncy}{m}{n}{<->wncyr10}{}
\DeclareSymbolFont{mcy}{U}{wncy}{m}{n}
\DeclareMathSymbol{\Sh}{\mathord}{mcy}{"58}
\definecolor{dblackcolor}{rgb}{0.0,0.0,0.0}
\definecolor{dbluecolor}{rgb}{0.01,0.02,0.7}
\definecolor{dgreencolor}{rgb}{0.2,0.4,0.0}
\definecolor{dgraycolor}{rgb}{0.30,0.3,0.30}
\lstdefinelanguage{Sage}[]{Python}
{morekeywords={False,sage,True},sensitive=true}
\lstdefinelanguage{Julia}%
  {morekeywords={abstract,break,case,catch,const,continue,do,else,elseif,%
      end,export,false,for,function,immutable,import,importall,if,in,%
      macro,module,otherwise,quote,return,switch,true,try,type,typealias,%
      using,while},%
   sensitive=true,%
   alsoother={$},%
   morecomment=[l]\#,%
   morecomment=[n]{\#=}{=\#},%
   morestring=[s]{"}{"},%
   morestring=[m]{'}{'},%
}[keywords,comments,strings]%
\begin{document}
\mbox{}
\title{A Projective Twist on the
Hasse Norm Theorem}

\author{Alan Bu}
\address{Harvard College\\Cambridge, MA, 02138}
\email{abu@college.harvard.edu}

\author{Thomas R\"ud}
\address{Massachussetts Institute of Technology\\Cambridge, MA, 02139}
\email{rud@mit.edu}

\begin{abstract}
A finite extension of global fields $L/K$ satisfies the Hasse norm principle if any nonzero element of $K$ has the property that it is a norm locally if and only if it is a norm globally. In 1931, Hasse proved that any cyclic extension satisfies the Hasse norm principle, providing a novel approach to extending the local-global principle to equations with degree greater than $2$. In this paper, we introduce the projective Hasse norm principle, generalizing the Hasse norm principle to multiple fields and asking whether a projective line that contains a norm locally in every field must also contain a norm globally in every field. We show that the projective Hasse norm principle is independent from the conjunction of Hasse norm principles in all of the constituent fields in the general case, but that the latter implies the former when the fields are all Galois and independent. We also prove an analogue of the Hasse norm theorem for the projective Hasse norm theorem, namely that the projective Hasse norm principle holds in all cyclic extensions.
\end{abstract}

\maketitle

\tableofcontents

\section{Introduction}
\label{sec:intro}
Nearly a century ago, mathematician Helmut Hasse first introduced his concept of the \emph{local-global principle}: the idea that, if a diophantine equation has a solution modulo every positive integer $n$, i.e. it always has a solution \emph{locally}, then it must have an actual solution \emph{globally} over the integers as well. Since then, the principle has been carefully studied in a variety of spaces, and to great depths. It turns out that in most cases, the principle actually fails: in {\cite{selmer}}, Selmer's famous counterexample is the equation $3x^3+4y^3+5z^3 = 0$. It is not hard to show this equation has a solution modulo every integer, and yet it has no solution over the rationals. However, in other instances, the principle holds: for instance, the Hasse-Minkowski theorem proves that the principle is true for all quadratic forms over the rationals: multivariable polynomial expressions whose monomials all have degree two. Selmer's example shows that we cannot extend Hasse-Minkowski's theorem into higher degrees, but there are many approaches to extend the local-global principle into cases involving higher degrees.

One of the most prominent of these approaches is through the use of multiplicative norms. In any field extension, we can assign each element a norm acting as a kind of magnitude. For instance, in the Gaussian rationals over $\mathbb{Q}$, the norm of an element is the same as the square of its complex magnitude. In 1931, Hasse published his acclaimed Hasse norm theorem {\cite{hasse}}: given a cyclic extension of the rationals $K/\mathbb{Q}$, he proved that the norm of the number field $K$ satisfies the local-global principle: given any rational number $q$, if the equation $N_{K/\mathbb{Q}}(x) = q$ has a solution locally then it has a solution globally.

In our research, we put a projective twist on the problem. Instead of looking at the local-global principle at specific points, we look at lines through the origin instead. Given a space of several field extensions, if a projective line always contains a local solution, when must it also contain a global solution?

To address this question, in \S\ref{sec:statement} we introduce the Hasse Norm Principle ($\HNP$), the Multinorm Problem, as well as the Projective Hasse Norm Principle ($\PHNP$). We fix standardized notations in \S \ref{sec:notations} and introduce the methods used in \S \ref{sec:methodology}. 

In \S\ref{sec:elementary} we show a few preliminary results around simple cases of the projective Hasse norm principle and its connection to the Hasse norm principle that do not require cohomology. In \S\ref{sec:description} we derive a closed form for the character lattices of tori closely related to the projective Hasse norm principle, which we use extensively to analyze our research problem in later section.

In \S\ref{sec:counterexamplephnptohnp}, we show that $\PHNP$ does not imply $\HNP$ by giving an explicit counterexample with a composite quadratic extension. On the other hand, it is often true that when $\HNP$ is true in all of the constituent fields, then $\PHNP$ holds as well. To concretely show this, we derive a simple sufficient condition for $\HNP \Longrightarrow \PHNP$ in \S \ref{sec:condition} dependent solely on the Galois group of the composite field extension. We also derive an explicit construction isomorphic to the Tate-Shafarevich group that encodes the $\PHNP$ condition in terms of the decomposition groups of the composite field extension, which allows us to completely reframe the problem of $\PHNP$ in terms of discrete group theory. Using these tools, we show that when all of the consituent fields are Galois and independent, then $\HNP \Longrightarrow \PHNP$.

Using these techniques, in \S \ref{sec:counterexamplehnptophnp} we construct an explicit counterexample to $\HNP \Longrightarrow \PHNP$ using a non-Galois constituent field. In \S \ref{sec:galois} we further study the implication $\HNP \longrightarrow \PHNP$ in the Galois case, and show that regardless of the choice of the constituent fields, when the Galois group of the composite group is abelian, dihedral, or of order $p^3$ for some prime $p$, then it follows that $\HNP \longrightarrow \PHNP$. We hope that the sufficient conditions for this implication and the methods used in this section will shed light on whether $\HNP$ implies $\PHNP$ in the general Galois case.

\section{Statement of the Problem and Related Questions}
\label{sec:statement}

Let $K$ be a global field, i.e. a finite extension of $\mathbb{Q}$ or the function field of a smooth curve over a finite field. Denote $\overline{K}$ its separable closure. For each place $v$ of $K$ let $K_v$ denote the corresponding completion of $K$ and let $\mathbb{A}_K$ the ring of ad\`eles of $K$. 

 Let $L/K$ be a separable extension. For each place $v$ of $K$ and place $w$ of $L$ such that $w|v$, the norm map $N_{L/K}: L^\times\rightarrow K^\times$ matches the norm map $N_{L_w/K_v} : L_w^\times \rightarrow K_v$ and therefore a map $\mathbb{A}_L^\times \rightarrow \mathbb{A}_K^\times$ which we will also denote by $N_{L/K}$.

\subsection{The Hasse Norm Principle}

The inclusion $L\subset \mathbb{A}_L$ yields \begin{equation}\label{eq:norminclusion}N_{L/K}(L^\times)\subseteq K^\times\cap N_{L/K}(\mathbb{A}_L^\times).\end{equation} If $L$ is a field, we say the \emph{Hasse Norm Principle} if the inclusion is an equality. We will let $\mathrm{HNP}_K(L)$ be the Boolean equal to True if and only if the Hasse Norm Principle holds.

\begin{theorem}[Hasse Norm Theorem, 1931]
$\mathrm{HNP}_K(L)$ holds whenever $L/K$ is cyclic. 
\end{theorem}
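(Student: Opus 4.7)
The plan is to translate $\HNP_K(L)$ into a statement about Tate cohomology and then deduce the cyclic case from class field theory. Setting $G = \mathrm{Gal}(L/K)$, the global norm map on $L^\times$ is exactly multiplication by the norm element $N_G = \sum_{\sigma \in G} \sigma$, so
\[
K^\times / N_{L/K}(L^\times) \;\cong\; \hat{H}^0(G, L^\times).
\]
Similarly, for a place $v$ of $K$ with decomposition group $G_v$ at a chosen $w \mid v$, we have $K_v^\times / N_{L_w/K_v}(L_w^\times) \cong \hat{H}^0(G_v, L_w^\times)$, which is independent of the choice of $w$ up to canonical isomorphism. The inclusion \eqref{eq:norminclusion} is an equality precisely when the natural comparison map
\[
\hat{H}^0(G, L^\times) \longrightarrow \bigoplus_v \hat{H}^0(G_v, L_w^\times)
\]
is injective; this is a restatement of $\HNP_K(L)$.

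The cyclic hypothesis now enters through the periodicity of Tate cohomology: whenever $G$ is cyclic, $\hat{H}^n \cong \hat{H}^{n+2}$ for all $n$. Every decomposition group $G_v$ is a subgroup of the cyclic group $G$, hence also cyclic, so the same periodicity holds locally. Shifting by two degrees, the question becomes the injectivity of
\[
\hat{H}^2(G, L^\times) \longrightarrow \bigoplus_v \hat{H}^2(G_v, L_w^\times),
\]
and by the fundamental computations of class field theory these groups are canonically identified with the relative Brauer groups $\mathrm{Br}(L/K)$ and $\mathrm{Br}(L_w/K_v)$ respectively.

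The injectivity is then a direct consequence of the Albert--Brauer--Hasse--Noether exact sequence
\[
0 \longrightarrow \mathrm{Br}(K) \longrightarrow \bigoplus_v \mathrm{Br}(K_v) \xrightarrow{\;\sum \mathrm{inv}_v\;} \mathbb{Q}/\mathbb{Z} \longrightarrow 0,
\]
whose left-hand injectivity is the Hasse principle for Brauer classes. Restricting to the subgroups split by $L$ produces the desired embedding $\mathrm{Br}(L/K) \hookrightarrow \bigoplus_v \mathrm{Br}(L_w/K_v)$, and unwinding the identifications yields $\HNP_K(L)$.

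The main obstacle is not the argument itself but the depth of input it consumes: the Hasse--Brauer--Noether sequence is a substantial theorem of global class field theory, as is the identification of $\hat{H}^2$ with the Brauer group. Once these are granted, the cyclic hypothesis enters only superficially, through Tate periodicity, to transport the norm-principle question from degree $0$ to degree $2$, where the global-to-local injectivity is already in hand.
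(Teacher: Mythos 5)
Your argument is correct in outline, but be aware that the paper does not prove this statement at all: it is quoted as Hasse's classical theorem with a citation to \cite{hasse}, so there is no internal proof to compare against. What you give is the standard Brauer-group proof: identify $(K^\times\cap N_{L/K}(\mathbb{A}_L^\times))/N_{L/K}(L^\times)$ with the kernel of $\hat{H}^0(G,L^\times)\to\bigoplus_v\hat{H}^0(G_v,L_w^\times)$, shift to degree $2$ by cyclic periodicity, and invoke the injectivity half of Albert--Brauer--Hasse--Noether. Two points deserve to be made explicit. First, the degree shift must be performed compatibly at all places: take the global periodicity isomorphism to be cup product with $\delta\chi\in H^2(G,\mathbb{Z})$ for a faithful character $\chi$ of the cyclic group $G$; since $\chi|_{G_v}$ is again faithful, hence a generator of $G_v^{\lor}$, cup product with its Bockstein is the local periodicity isomorphism, and the relevant square commutes because cup products commute with restriction. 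Without this naturality statement, injectivity in degree $2$ does not formally transfer back to degree $0$. Second, there is a logical-order caveat: in Hasse's original development (and in several textbook treatments) the cyclic norm theorem is proved first, via $\hat{H}^{-1}(G,C_L)\cong\hat{H}^{1}(G,C_L)=0$ from the sequence $1\to L^\times\to\mathbb{A}_L^\times\to C_L\to 1$, and Albert--Brauer--Hasse--Noether is deduced afterwards; so you must quote a proof of the Brauer--Hasse principle that does not itself pass through the norm theorem (such proofs exist in modern class field theory), which makes this a presentational rather than a mathematical issue. Finally, it is worth noting that the paper's own toolkit gives an even shorter derivation consuming the same depth of input: by Tate--Nakayama, $\Sh^1(K,\mathbf{T}^1_{L/K})\cong\Sh^2(L/K,\mathbf{X}^\star(\mathbf{T}^1_{L/K}))$, and since $\mathbf{X}^\star(\mathbf{T}^1_{L/K})\cong\mathbb{Z}[G]/\langle N_G\rangle$ with $N_G=\sum_{g\in G}g$, the sequence $0\to\mathbb{Z}\to\mathbb{Z}[G]\to\mathbb{Z}[G]/\langle N_G\rangle\to 0$ gives $H^2(G,\mathbb{Z}[G]/\langle N_G\rangle)\cong H^3(G,\mathbb{Z})\cong H^1(G,\mathbb{Z})=0$ for cyclic $G$, so the entire $H^2$, and a fortiori the Tate--Shafarevich subgroup, vanishes.
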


The modern study of the Hasse Norm principle uses techniques in class field theory to determine when \begin{equation}\label{eq:shanormpple}\Sh(L/K):=(K^\times\cap N_{L/K}(\mathbb{A}_L^\times))/N_{L/K}(L^\times) \cong \Sh^1(K,\mathbf{T}^1_{L/K})\end{equation}
is trivial, where $\mathbf{T}^1_{L/K} = \mathrm{R}_{L/K}^{(1)}\mathbb{G}_m=\mathrm{Ker}(N_{L/K}: \mathrm{R}_{L/K}\mathbb{G}_m\rightarrow \mathbb{G}_m )$ is norm-one torus corresponding to $L/K$. 
For any $\mathrm{Gal}(\overline{K}/K)$-module $M$, we use the notation
\[\Sh^i(K, M) = \mathrm{Ker}\left( H^i(K, M)\rightarrow \prod_{v}H^i(K_v, M)\right),\]
where $v$ runs over places of $K$. This group is called the $i$th \emph{Tate-Shafarevich group}. For ease of notations we write $\Sh^i(K, \mathbf{Q})$ for $\Sh^i(K, \mathbf{Q}(\overline{K}))$ for any torus $\mathbb{Q}$ defined over $K$. If $\mathrm{Gal}(\overline{K}/L)$ acts trivially on $M$ we will define $\Sh^i(L/K, M)$ similarly. \\

The rational points of $\mathbf{T}^1$ are 
\[\mathbf{T}^1_{L/K}(K) = \{x\in L^\times\ :\ N_{L/K}(x) = 1 \}.\]
 The relation (\ref{eq:shanormpple}) is made apparent when looking at the Galois cohomology associated to the sequence 
 \begin{equation}
     1\rightarrow\mathbf{T}^1_{L/K}\mathbb{G}_m(\overline{K})\rightarrow \mathrm{R}_{L/K}\mathbb{G}_m(\overline{K})\overset{N_{L/K}}{\rightarrow}\mathbb{G}_m(\overline{K})\rightarrow 1.
 \end{equation}

Moreover, one can compute $\Sh^1(K,\mathbf{T}^1_{L/K})$ explicitly via Tate-Nakayama duality, stating that 
\[\Sh^1(K,\mathbf{T}^1_{L/K})\cong \Sh^2(L/K,\mathbf{X}^\star(\mathbf{T}^1_{L/K})),\]
where $\mathbf{X}^\star$ is the functor mapping a torus to its character lattice.

\subsection{The Multinorm Principle}

More recently, many have studied the \emph{multinorm principle} which treats eq. (\ref{eq:norminclusion}) in the case when $L$ is not a field but an \'etale algebra $L\cong L_1\oplus \cdots\oplus L_k$ where each $L_i/K$ is a field extension. We have $N_{L/K} = \prod_{i=1}^k N_{L_i/K}$ and the multinorm principle holds whenever 
\[\prod_{i=1}^kN_{L_i/K}(L_i^\times) =  K^\times\cap \prod_{i=1}^k N_{L_i/K}(\mathbb{A}_{L_i}^\times).\]

Similarly to the Hasse Norm theorem, one studies this problem by trying to compute the finite group 
\[\Sh(L/K):= \left(K^\times\cap \prod_{i=1}^k N_{L_i/K}(\mathbb{A}_{L_i}^\times)\right)/\left(\prod_{i=1}^kN_{L_i/K}(L_i^\times)\right) \cong \Sh^1(K, \mathbf{S}).\]
where $\mathbf{S} = \mathrm{Ker}(\prod_{i}N_{L_i/K}: \prod_{i} \mathrm{R}_{L_i/K}\mathbb{G}_m\rightarrow \mathbb{G}_m)$. We have 
\[ \mathbf{S}(K) = \left\lbrace (\ell_1,\cdots,\ell_k)\in L_1^\times\times \cdots, L_k^\times\ :\ N_{L_1/K}(\ell_1)\cdots N_{L_k/K}(\ell_k) = 1\right\rbrace\supset \mathbf{T}^1_{\cap_i L_i/K}.\]
Let us write a few important results about the multinorm principle.

\begin{theorem}[\cite{pollio-rapinchuk}]
    Let $L_1,L_2$ be separable extensions of $K$ with Galois closure $E_1,E_2$ respectively. If $E_1\cap E_2 = K$ then $L = L_1\oplus L_2$ satisfies the multinorm principle.
\end{theorem}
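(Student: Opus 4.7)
\medskip
\noindent\textbf{Proof proposal.} The plan is to prove the multinorm principle via Tate--Nakayama duality, using the product structure of the Galois group forced by $E_1\cap E_2=K$. Since the multinorm principle for $L=L_1\oplus L_2$ is equivalent to the vanishing of $\Sh(L/K)\cong \Sh^1(K,\mathbf S)$, by Tate--Nakayama it suffices to show that $\Sh^2(G,\mathbf X^\star(\mathbf S))=0$, where $E=E_1E_2$ and $G=\Gal(E/K)$. Writing $G_i=\Gal(E_i/K)$ and $M_i=\Gal(E_i/L_i)$, the hypothesis $E_1\cap E_2=K$ gives $G\simeq G_1\times G_2$, and makes the stabilizer of $L_i$ inside $G$ equal to $H_i=M_i\times G_{3-i}$, so that $\mathbb Z[G/H_i]$ is simply $\mathbb Z[G_i/M_i]$ inflated along the projection $G\twoheadrightarrow G_i$.

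Dualizing the defining sequence of $\mathbf S$ produces the permutation-module resolution
\[0\to \mathbb Z \to \mathbb Z[G/H_1]\oplus \mathbb Z[G/H_2] \to \mathbf X^\star(\mathbf S)\to 0,\]
whose long exact sequence in cohomology, after applying Shapiro's lemma to the middle terms, reads
\[\cdots \to H^2(G,\mathbb Z)\xrightarrow{\phi} H^2(H_1,\mathbb Z)\oplus H^2(H_2,\mathbb Z) \to H^2(G,\mathbf X^\star(\mathbf S)) \xrightarrow{\psi} H^3(G,\mathbb Z)\to \cdots.\]
By the K\"unneth formula applied to $G=G_1\times G_2$ and $H_i=M_i\times G_{3-i}$, each term splits as a sum of contributions from the two factors, and the map $\phi$ acts componentwise by character restriction $\mathrm{Hom}(G_i,\mathbb Q/\mathbb Z)\to \mathrm{Hom}(M_i,\mathbb Q/\mathbb Z)$.

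To finish, I would show that any $\xi\in \Sh^2(G,\mathbf X^\star(\mathbf S))$ vanishes. Since $\Sh^2$-classes restrict trivially to every decomposition subgroup of $G$ and the long exact sequence is natural under restriction, such a $\xi$ lifts to some $\eta\in H^2(H_1,\mathbb Z)\oplus H^2(H_2,\mathbb Z)$ whose restriction to each decomposition group $D_v$ lies in the image of $H^2(D_v,\mathbb Z)$. Combining the componentwise description of $\phi$ with Chebotarev density, which realizes every cyclic subgroup of $G_1\times G_2$ as a decomposition group at some unramified place, one then forces $\eta\in \mathrm{Im}(\phi)$, making $\xi=0$.

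The main obstacle, in my view, is the $H^3(G,\mathbb Z)$ term: controlling the kernel of $\psi$ requires a careful K\"unneth bookkeeping of the torsion cross-terms between $G_1$- and $G_2$-cohomology, and one must verify that the decomposition subgroups of $G_1\times G_2$ are rich enough to kill any potentially obstructing class. Once this is done, $\Sh^2(G,\mathbf X^\star(\mathbf S))=0$ and the multinorm principle for $L=L_1\oplus L_2$ follows.
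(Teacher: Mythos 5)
You should first note that the paper does not actually prove this theorem: it is imported from Pollio--Rapinchuk, and the only indication given of the original argument is that it proceeds by studying the map relating the obstruction groups $\Sh(L/K)$ and $\Sh(L_i/K)$. Your Tate--Nakayama/character-lattice strategy is therefore a different route and must stand on its own. Its setup is correct: the multinorm principle is equivalent to $\Sh^1(K,\mathbf{S})=0$, duality reduces this to $\Sh^2(G,\mathbf{X}^\star(\mathbf{S}))=0$ with $G\cong G_1\times G_2$, the sequence $0\to\mathbb{Z}\to\mathbb{Z}[G/H_1]\oplus\mathbb{Z}[G/H_2]\to\mathbf{X}^\star(\mathbf{S})\to 0$ is the right one, and Shapiro identifies the relevant maps with character restrictions. (One inaccuracy: the component of $\phi$ landing in $H_i^{\lor}=(M_i\times G_{3-i})^{\lor}$ sends $(\chi_1,\chi_2)$ to $(\chi_i|_{M_i},\chi_{3-i})$, not to a purely componentwise restriction $\mathrm{Hom}(G_i,\mathbb{Q}/\mathbb{Z})\to\mathrm{Hom}(M_i,\mathbb{Q}/\mathbb{Z})$; this matters when you compute $\mathrm{Im}\,\phi$.)

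The genuine gap is the lifting step. To pull $\xi\in\Sh^2(G,\mathbf{X}^\star(\mathbf{S}))$ back to some $\eta\in H^2(H_1,\mathbb{Z})\oplus H^2(H_2,\mathbb{Z})$ you need $\psi(\xi)=0$ in $H^3(G,\mathbb{Z})$, and the justification offered --- that $\xi$ restricts to zero on every decomposition group --- only places $\psi(\xi)$ in the kernel of restriction to those groups, which gives nothing here: $H^3(D,\mathbb{Z})=0$ for every cyclic $D$, so whenever the decomposition groups are cyclic that kernel is all of $H^3(G,\mathbb{Z})$. Moreover the connecting map is genuinely nonzero in exactly the relevant situations: already for $G=G_1\times G_2=(\mathbb{Z}/p\mathbb{Z})^2$ with $M_1,M_2$ trivial, $H^3(G,\mathbb{Z})\cong H^2(G,\mathbb{Q}/\mathbb{Z})\cong\mathbb{Z}/p\mathbb{Z}$ is precisely the K\"unneth cross term, it restricts to zero on both $H_1$ and $H_2$, hence equals $\mathrm{Im}\,\psi$ by exactness; so there exist classes of $H^2(G,\mathbf{X}^\star(\mathbf{S}))$ with $\psi\neq 0$, and excluding them from $\Sh^2$ requires computing the restriction of $\xi$ itself to cyclic subgroups (a Mackey/double-coset analysis of $\mathrm{Res}_C\,\mathbb{Z}[G/H_i]$, e.g. on diagonal cyclic subgroups), which the proposal never carries out. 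The same missing bookkeeping underlies the closing assertion that Chebotarev ``forces $\eta\in\mathrm{Im}(\phi)$.'' You do flag the $H^3$ term as the main obstacle, but that is where the proof actually has to happen; as written, the argument establishes the reduction to a lattice-cohomology computation, not the theorem.
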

This is proven by studying the surjectivity of the map $\Sh(L/K)\rightarrow \prod_{i=1}^k\Sh(L_i/K)$. 
\begin{theorem}[\cite{pollio}]
If $L_1,L_2$ are finite abelian field extensions of $K$ then 
\[\Sh(L_1\oplus L_2/K)\cong \Sh(L_1\cap L_2/K).\]
In particular, the Multinorm Principle corresponding to $L_1\oplus L_2$ holds if and only if it holds $L_1\cap L_2$ satisfies the Hasse Norm Principle.
\end{theorem}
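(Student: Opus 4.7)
My plan is to build a natural short exact sequence of $K$-tori relating the multinorm torus $\mathbf{S}$ to the norm-one torus $\mathbf{T}^1_{F/K}$, and then to apply the Pollio–Rapinchuk theorem (the theorem immediately preceding the statement) over the intermediate base field $F=L_1\cap L_2$ to collapse the associated long exact sequence in Galois cohomology. Setting $F=L_1\cap L_2$, which is contained in both $L_1$ and $L_2$, I define a map of $K$-tori $\phi:\mathbf{S}\to\mathbf{T}^1_{F/K}$ by
\[(a,b)\longmapsto N_{L_1/F}(a)\cdot N_{L_2/F}(b).\]
The image lies in $\mathbf{T}^1_{F/K}$ because $N_{F/K}\circ\phi = N_{L_1/K}\cdot N_{L_2/K}$ is trivial on $\mathbf{S}$, and surjectivity on $\overline{K}$-points is immediate from surjectivity of norm maps of tori. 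A routine check on functor-of-points shows $\ker\phi\cong \mathrm{R}_{F/K}(\mathbf{S}')$, where $\mathbf{S}'$ is the multinorm torus over $F$ attached to the étale $F$-algebra $L_1\oplus L_2$, producing
\[1\longrightarrow \mathrm{R}_{F/K}(\mathbf{S}')\longrightarrow \mathbf{S}\longrightarrow \mathbf{T}^1_{F/K}\longrightarrow 1.\]

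The next step is to verify that $\mathbf{S}'$ contributes nothing to $\Sh^1$. Since $L_1,L_2$ are abelian over $K$, they are Galois over $F$, so each coincides with its own Galois closure over $F$; by construction they intersect in $F$. The hypothesis of the Pollio–Rapinchuk theorem is therefore satisfied over the base $F$, so the multinorm principle holds for $L_1\oplus L_2$ as an étale $F$-algebra, i.e., $\Sh^1(F,\mathbf{S}')=0$. Shapiro's lemma (which commutes with restriction to completions) then gives $\Sh^1(K,\mathrm{R}_{F/K}(\mathbf{S}'))=0$. Feeding this into the long exact sequence in Galois cohomology and taking kernels at every place yields
\[0\longrightarrow \Sh^1(K,\mathbf{S})\longrightarrow \Sh^1(K,\mathbf{T}^1_{F/K})\overset{\delta}{\longrightarrow} \Sh^2(K,\mathrm{R}_{F/K}(\mathbf{S}')),\]
which identifies $\Sh(L_1\oplus L_2/K)\cong\Sh(F/K)$ as soon as $\delta$ is shown to vanish.

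What remains, and this is the main technical obstacle, is the vanishing of the connecting map $\delta$. My plan here is to pass to character lattices via Tate–Nakayama duality, recasting $\delta$ as a map between $\Sh^2$ groups of the relevant $\Gal(L_1L_2/K)$-lattices. I would then exploit the splitting $\Gal(L_1L_2/F)\cong \Gal(L_1/F)\times\Gal(L_2/F)$ (a direct consequence of the abelianness hypothesis together with $L_1\cap L_2=F$) to decompose $\mathbf{X}^\star(\mathbf{S}')$ compatibly with the two factors, and verify that any class arriving from $\Sh^1(K,\mathbf{T}^1_{F/K})$ is already trivialized by the decomposition-group restrictions at every place, using the cyclicity of local Galois groups of abelian extensions (via Hasse's theorem applied locally). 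This is precisely the step where abelianness is essential; in the non-Galois or non-abelian setting the analogous connecting map need not vanish. Once $\delta=0$ is established, the ``in particular" clause is automatic, since the multinorm principle for $L_1\oplus L_2$ is the vanishing of $\Sh(L_1\oplus L_2/K)$ while $\HNP_K(F)$ is the vanishing of $\Sh(F/K)$.
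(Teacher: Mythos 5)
Your construction of the exact sequence of tori is correct (the kernel of $(a,b)\mapsto N_{L_1/F}(a)N_{L_2/F}(b)$ on $\mathbf{S}$ is indeed $\mathrm{R}_{F/K}(\mathbf{S}')$, and Pollio--Rapinchuk over the base $F$ plus Shapiro does give $\Sh^1(K,\mathrm{R}_{F/K}(\mathbf{S}'))=0$), but note the paper itself offers no proof of this theorem --- it is quoted from \cite{pollio} --- so your attempt stands on its own, and as written it has two genuine gaps. The first is the step ``taking kernels at every place yields'' the displayed sequence of Tate--Shafarevich groups: $\Sh^1$ is not an exact functor on long exact sequences, and $\Sh^1(K,\mathrm{R}_{F/K}(\mathbf{S}'))=0$ does not by itself give injectivity of $\Sh^1(K,\mathbf{S})\to\Sh^1(K,\mathbf{T}^1_{F/K})$. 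Concretely, if $x\in\Sh^1(K,\mathbf{S})$ dies in $H^1(K,\mathbf{T}^1_{F/K})$, it lifts to some $y\in H^1(K,\mathrm{R}_{F/K}(\mathbf{S}'))$, but locally you only know $y_v$ lies in the image of the connecting map from $H^0(K_v,\mathbf{T}^1_{F/K})$, not that $y_v=0$; so you cannot place $y$ in $\Sh^1$ and conclude $x=0$ without an extra argument (control of the cokernels of $H^0(K_v,\mathbf{S})\to H^0(K_v,\mathbf{T}^1_{F/K})$, a weak-approximation input, or a duality argument). The same problem recurs for surjectivity onto $\ker\delta$: even granting $\delta=0$, a global lift of a class of $\Sh^1(K,\mathbf{T}^1_{F/K})$ to $H^1(K,\mathbf{S})$ is locally only known to come from $H^1(K_v,\mathrm{R}_{F/K}(\mathbf{S}'))\cong\prod_{w\mid v}F_w^\times/\bigl(N_{L_1\otimes F_w/F_w}(\cdot)\,N_{L_2\otimes F_w/F_w}(\cdot)\bigr)$, which is generally nonzero, so the lift need not be locally trivial.

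The second gap is that the step you yourself identify as the main obstacle --- the vanishing of $\delta$ on $\Sh^1(K,\mathbf{T}^1_{F/K})$ --- is only a plan, and the plan rests on a false premise: decomposition groups of an abelian extension need not be cyclic at ramified places (a biquadratic extension of $\mathbb{Q}_2$ already gives a Klein four decomposition group), so ``cyclicity of local Galois groups of abelian extensions'' cannot drive the proposed trivialization; indeed Hasse's example $\mathbb{Q}(\sqrt{-3},\sqrt{13})$ shows that cyclic decomposition groups, when they do occur, are exactly the situation where such $\Sh$-groups can be nonzero, so the abelian hypothesis must enter through an actual computation with the character lattices of $\mathbf{S}$, $\mathrm{R}_{F/K}(\mathbf{S}')$ and $\mathbf{T}^1_{F/K}$ as $\Gal(L_1L_2/K)$-modules (using $\Gal(L_1L_2/F)\cong\Gal(L_1/F)\times\Gal(L_2/F)$), not through a local cyclicity claim. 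Until both the exactness of your $\Sh$-sequence and $\delta=0$ are actually established, the argument does not prove the isomorphism $\Sh(L_1\oplus L_2/K)\cong\Sh(L_1\cap L_2/K)$.
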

Pollio-Rapinchuk (\cite{pollio-rapinchuk}) conjectured that the multinorm principle for $ L = L_1\oplus L_2$ holds whenever every subfield of $L_1\cap L_2$ satisfies the Hasse Norm Principle but a counterexample appeared in \cite{demarche-wei}, where the authors show an isomorphism between distinguished subgroup (weak approximation) of  $\Sh^2(K, \mathbf{X}^\star(\mathbf{S}))$ and $\Sh^2(K, \mathbf{X}^\star(\mathbf{T}^1_{\cap_i L_i/K}))$.

\subsection{The Projective Hasse Norm Principle} In this paper we offer a modification by introducing an intermediate field. Let $K\subset E\subset L$ be an intermediate separable extension and let $n = [E:K]\geq 2$. As a $K$-vector space, we have $E\cong K^{n}$ and therefore we have a projection map 
\[\pi : E^\times\rightarrow \mathbb{P}_K(E^\times) = E^\times/K^\times \cong \mathbb{P}^{n-1}(K).\]
We define a similar map $\pi : \mathbb{A}_E\rightarrow \mathbb{P}_{\mathbb{A}_K^\times}(\mathbb{A}_E)$.

We say the \emph{Projective Hasse Norm Principle} holds if 
\[\pi(N_{L/E}(L^\times)) = \pi \left(E^\times\right)\cap \pi\left(N_{L/E}(\mathbb{A}_L^\times)\right).\]

Concretely, the Projective Hasse Norm principle corresponding to $(L,E,K)$ holds if for all $e\in E^\times$, if for all triples $(u,v,w)$ of places of $K, E, L$ respectively such that $u|v$ and $v|w$, there is $\lambda_{u,v,w}\in K_{u}$ such that  $\lambda_{u,v,w} e\in \mathrm{Im}(N_{L_w/E_v})$, then there is $\lambda\in K$ such that $\lambda e\in \mathrm{Im}_{L/E}$.

Let us rephrase this principle in terms of a \emph{projective norm}.

\begin{definition}[Projective Norm]
    Let $L\supset E$ be separable extensions of $K$, we define the \emph{projective norm} $\mathrm{PN}_{L/E/K}: K^\times \times L^\times\rightarrow E^\times$ by 
    \[(k, \ell)\mapsto k^{-1}N_{L/E}(\ell).\]

    We say the Projective Hasse Norm Principle relative to $L/E/K$ holds if 
    \[\mathrm{PN}_{L/E/K}(K^\times\times L^\times) = E^\times\cap \mathrm{PN}_{L/E/K}(\mathbb{A}_K^\times\times \mathbb{A}_L^\times).\]
\end{definition}

We are interested in the case $E = K^{\oplus n}$ and $L = \bigoplus_{i=1}^n L_i$ where each $L_i/K$ is a field extension. We write $\mathrm{PHNP}_K(L_1,\cdots, L_n)$ the boolean corresponding to the truth of the corresponding Projective Hasse Norm Principle.

Concretely speaking,  $\mathrm{PHNP}_K(L_1,\cdots, L_n)$ is true if and only if any line $L\subset K^n$ such that $L\otimes \mathbb{A}_K$ contains a local point in the image of product of the norms $N_{L_1/K}\times\cdots\times N_{L_n/K}$, then there is a global point of $L$ such that each coordinate is in the image of $N_{L_1/K}\times\cdots\times N_{L_n/K}$.

In a similar fashion to the previous problems, we want to determine when 
\[\Sh(L/E/K):= (E^\times\cap \mathrm{PN}_{L/E/K}(\mathbb{A}_K^\times)\times \mathbb{A}_L^\times)/\mathrm{PN}_{L/E/K}(K^\times\times L^\times) = \Sh^1(K, \mathbf{T})\]
is trivial, where $\mathbf{T} : \mathrm{Ker}\left(\mathbb{G}_m\times \mathrm{R}_{L/K}\mathbb{G}_m\overset{\mathrm{PN}_{L/E}}{\longrightarrow} \mathrm{R}_{E/K}\mathbb{G}_m\right)$. The rational points of $\mathbf{T}$ are 
\begin{align*}
    \mathbf{T}(K) &\cong \{(x, y)\in K^\times\times L^\times \ :\ N_{L/K}(y)=x\}\\
    &\cong \left\lbrace y\in L^\times\ :\ N_{L/E}(y)\in K^\times\right\rbrace.
\end{align*}
In our specific setting, we can write 
\begin{align*}
    \mathbf{T}(K) &\cong \{(y_1,\cdots, y_n)\in (L_1\times \cdots\times L_n)^\times :N_{L_i/K}(x_i)=N_{L_j/K}(x_j)\ \forall 1\leq i,j\leq n\}\\
\end{align*}
    We will study this the cohomology of $\mathbf{T}$  with the cohomology of the subtorus $\mathrm{R}_{E/K}\mathbf{T}^1_{F/E}$ which in our case is isomorphic to $\prod_{i=1}^n\mathbf{T}^1_{L_i/K}$.\\

Firstly, let us note that we can always assume $n>1$.

\begin{prop}
If $E = F$ (i.e. $n = 1$), then the Projective Hasse Norm Principle holds trivially.
\end{prop}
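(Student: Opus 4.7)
The plan is to read the hypothesis $n=1$ literally: since $n=[E:K]$, the case $n=1$ forces $E=K$ (so the ``$F$'' in the statement is a typo for $K$), and hence the projective space $\mathbb{P}_K(E^\times)=E^\times/K^\times$ collapses to a single point. Under the map $\pi:E^\times\rightarrow \mathbb{P}_K(E^\times)$, both the global image $\pi(N_{L/E}(L^\times))$ and the adèlic image $\pi(E^\times)\cap\pi(N_{L/E}(\mathbb{A}_L^\times))$ are nonempty subsets of a singleton, so they automatically agree. Nothing about the arithmetic of $L/K$ is actually used.

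Equivalently, I would verify this directly in the $\mathrm{PN}$ formulation. When $E=K$, the projective norm sends $(k,\ell)\in K^\times\times L^\times$ to $k^{-1}N_{L/K}(\ell)\in K^\times=E^\times$. Taking $\ell=1$ and letting $k$ range over $K^\times$ already surjects onto $E^\times$, so $\mathrm{PN}_{L/E/K}(K^\times\times L^\times)=E^\times$. The same argument with $k$ ranging over $\mathbb{A}_K^\times$ shows that $E^\times\cap \mathrm{PN}_{L/E/K}(\mathbb{A}_K^\times\times\mathbb{A}_L^\times)=E^\times$ as well, and the required equality is immediate.

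There is no real obstacle here; this is a boundary case confirming that the interesting regime of $\mathrm{PHNP}$ is $n\geq 2$, which justifies the blanket assumption $n\geq 2$ made in the rest of the paper. As a sanity check, one can also recover the result cohomologically: when $E=K$ the torus $\mathbf{T}=\mathrm{Ker}(\mathbb{G}_m\times \mathrm{R}_{L/K}\mathbb{G}_m\xrightarrow{\mathrm{PN}}\mathbb{G}_m)$ is isomorphic to $\mathrm{R}_{L/K}\mathbb{G}_m$ via the second projection, and Shapiro's lemma together with Hilbert 90 yields $\Sh^1(K,\mathrm{R}_{L/K}\mathbb{G}_m)\cong \Sh^1(L,\mathbb{G}_m)=0$, so $\Sh(L/E/K)$ vanishes.
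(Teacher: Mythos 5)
Your proof is correct, and your main argument is more elementary than the paper's. The paper's own proof is exactly your closing ``sanity check'': it observes that when $E=K$ the torus $\mathbf{T}=\mathrm{Ker}\bigl(\mathbb{G}_m\times \mathrm{R}_{L/K}\mathbb{G}_m\xrightarrow{\mathrm{PN}}\mathbb{G}_m\bigr)$ is isomorphic to $\mathrm{R}_{L/K}\mathbb{G}_m$ (the graph of the norm, projected to the second factor), and then concludes $\Sh^1(K,\mathbf{T})\subseteq H^1(K,\mathbf{T}(\overline{K}))=1$ by Shapiro's lemma and Hilbert 90 --- staying entirely inside the cohomological framework ($\mathrm{PHNP}\Leftrightarrow \Sh^1(K,\mathbf{T})=1$) that the paper uses throughout. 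Your primary argument instead works straight from the definition: with $E=K$ the projective space $E^\times/K^\times$ is a point (equivalently, in the $\mathrm{PN}$ formulation, taking $\ell=1$ and letting $k$ vary already gives $\mathrm{PN}_{L/E/K}(K^\times\times L^\times)=E^\times$, and likewise ad\`elically), so both sides of the defining equality are forced to coincide with no input from the arithmetic of $L/K$. What your route buys is transparency and independence from the cohomological machinery (it does not even need the translation of $\mathrm{PHNP}$ into triviality of $\Sh^1$); what the paper's route buys is uniformity with the rest of the exposition, since every later case is handled through $\Sh^1(K,\mathbf{T})$ and its character-lattice dual. Your reading of the typo ($F$ should be $K$, consistent with ``$n=1$'' and with the paper's own proof beginning ``If $E=K$'') is also the intended one.
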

\begin{proof}
    If $E = K$ then we have $\mathbf{T} \cong \mathrm{R}_{L/K}\mathbb{G}_m$ and hence \[\Sh^1(K, \mathbf{T}) \subset H^1(K, \mathbf{T}(\overline{K}))=1.\]
\end{proof}

\begin{remark}
\label{identicalpair}
If $n = 2$, $\PHNP_K(L_1, L_2)$ can be reformulated as the following statement: for any fixed $y \in K$, the equation $y = \frac{N_{L_1/K}(x_1)}{N_{L_2/K}(x_2)}$ has a global solution if and only if it has a local solution. In particular we get the following result.
\end{remark}

\begin{prop}
    If $n=2$ and  $L_1 = L_2$, we have
    \[\PHNP_K(L_1,L_2) = \HNP_K(L_1).\]
\end{prop}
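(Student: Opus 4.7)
The plan is to reduce the claim to the content of Remark \ref{identicalpair}, which reformulates $\PHNP_K(L, L)$ as: for every $y \in K^\times$, the equation $y = N_{L/K}(x_1)/N_{L/K}(x_2)$ has a global solution if and only if it has a local solution at every place. The key structural observation is that when $L_1 = L_2 = L$ is a single field, the set of ratios $N_{L/K}(x_1)/N_{L/K}(x_2)$ coincides with the image $N_{L/K}(L^\times)$, both globally and at each completion: this is simply because $N_{L/K}$ is a group homomorphism, so its image is a subgroup and hence closed under inversion.

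For the forward direction $\HNP_K(L) \Longrightarrow \PHNP_K(L, L)$, I would argue that if $y \in K^\times$ is locally a ratio of norms at every place, then multiplicativity of the local norm lets me rewrite each local ratio as a single local norm at every $v$. Applying $\HNP_K(L)$ turns this into a global norm $y = N_{L/K}(x)$, and then $(x, 1) \in L^\times \times L^\times$ witnesses $y$ as a global ratio, as desired.

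For the reverse direction $\PHNP_K(L, L) \Longrightarrow \HNP_K(L)$, I would start from $y \in K^\times$ being a local norm at every place, write each local realization trivially as a ratio $N(x_v)/N(1)$ to satisfy the $\PHNP$ local hypothesis, invoke $\PHNP_K(L, L)$ to obtain global $x_1, x_2 \in L^\times$ with $y = N_{L/K}(x_1)/N_{L/K}(x_2)$, and conclude that $y = N_{L/K}(x_1 x_2^{-1})$ is a global norm.

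I do not expect any serious obstacle. The entire content is the degeneration of ``ratio of norms from two fields'' to ``norm from one field'' when the two fields coincide, which is immediate from multiplicativity of $N_{L/K}$. The only bookkeeping subtlety is that the definition of $\PHNP$ separately tracks places above $v$ in $L_1$ and $L_2$, but since here $L_1 = L_2 = L$ these place sets coincide, so the local conditions collapse to a single condition on $L \otimes_K K_v$ for each $v$, matching the local condition of $\HNP_K(L)$ exactly.
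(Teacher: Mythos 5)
Your proposal is correct and follows essentially the same route as the paper: both reduce via Remark \ref{identicalpair} to the ratio-of-norms formulation and then use multiplicativity of $N_{L_1/K}$ to collapse the ratio $N_{L_1/K}(x_1)/N_{L_1/K}(x_2)$ to a single norm $N_{L_1/K}(x_1/x_2)$, so that the local-global statement becomes exactly $\HNP_K(L_1)$. Your write-up just spells out the two directions (and the collapse of the local conditions when the two fields coincide) more explicitly than the paper's two-line argument.
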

\begin{proof}
    Following Remark \ref{identicalpair}, the equation reduces to $y = N_{L_1/K}(\frac{x_1}{x_2})$. A local solution implying a global solution is equivalent to $\HNP_K(L_1)$. 
\end{proof}
Thusly, we can see that the Projective Hasse Norm Principle is a strict generalization of the Hasse Norm Principle.

In this paper, we compare $\PHNP_K$ and $\HNP_K$ across many settings, mainly studying whether it is true that $\PHNP_K(L_1, L_2, \ldots, L_n)$ implies or is implied by $\bigwedge_i \HNP_K(L_i)$ in each setting of the problem. We will show counter examples for each direction proving that the principles cannot be compared without assumptions  on $L_1,\cdots, L_n$.  In particular, we show in Theorem \ref{abelianfinish} that $\bigwedge_i \HNP_K(L_i)$ implies $\PHNP_K(L_1, L_2, \ldots, L_n)$ when each of the field extensions $L_i/K$ is Galois and abelian, which in turn implies a projective analogue of the Hasse norm theorem. We also show that this result holds when $\mathrm{Gal}((L_1L_2\cdots L_n)^{\sharp})$ is dihedral or has order $p^3$ for some prime $p$. We also demonstrate in Proposition \ref{independent} that when $L_1, L_2, \ldots, L_n$ are each Galois extensions of $K$ and have independent Galois groups, it is also true that $\bigwedge_i \HNP_K(L_i) \Longrightarrow \PHNP_K(L_1, L_2, \ldots, L_n)$.

\begin{remark}
   Note that when $L/E$ is quadratic, then the study of the projective Hasse norm principle can be related to computing the Tamgawa number of centralizers of  similitude groups, and has connections with counting point on Shimura varieties, as done in \cite{achter}. We will also mention this application in Remark \ref{rmk:elliptic}.
\end{remark}
\section{Notations}\label{sec:notations}
Let $L_1, L_2, \ldots, L_n$ be finite extensions of a global field $K$. Let $L = (L_1L_2\cdots L_n)^{\sharp}$ be the Galois closure of the composite field $L_1L_2\cdots L_n$. Let $\tilde{L}$ denote the \'etale algebra $L_1 \oplus L_2 \oplus \cdots \oplus L_n$. Note that this is different from our previous section where $L$ was the \'etale algebra. Let $G$ denote $\Gal(L/K)$. For simplicity, we let $H^i(M)$ denote $H^i(G, M)$ for any $\mathbb{Z}[G]$-module $M$. For any finite group $A$, we let $A^{\lor} = \mathrm{Hom}(A, \mathbb{Q}/\mathbb{Z})$ denote the Pontryagin dual of $A$ and let $A^{\mathrm{ab}} = (A^{\lor})^{\lor}$ denote the abelianization of $A$. We furthermore let $A^{\mathrm{der}}$ denote the derived or commutator subgroup of $A$.

For each $i$, we define $G^{(i)} = \Gal(L/L_i)$. If $L_i/K$ is a Galois extension, we define $G_i = \Gal(L_i/K)$ so that $G_i \cong G/G^{(i)}$. If $L_1, L_2, \ldots, L_n$ are all independent and Galois, then we have $G \cong \prod_i G_i$ and $G^{(i)} \cong \prod_{j \neq i} G_j$ for all $i$.\\

Let us state a simple but useful fact about $G^{(i)}$'s.
\begin{lemma}
    We have that $\bigcap_i G^{(i)} = 0$.
    \label{separatedk}
\end{lemma}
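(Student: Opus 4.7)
The plan is a one-step argument via the Galois correspondence. Each $G^{(i)} = \Gal(L/L_i)$ is, by definition, the subgroup of $G = \Gal(L/K)$ fixing $L_i$ pointwise, so an element lies in $\bigcap_i G^{(i)}$ if and only if it fixes every element of every $L_i$, equivalently if and only if it fixes the compositum $M := L_1 L_2 \cdots L_n$ pointwise. Invoking the Galois correspondence between subfields of $L$ containing $K$ and subgroups of $G$, this identifies
\[\bigcap_i G^{(i)} = \Gal(L/M).\]

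The conclusion then comes from the definition of $L$. Since $L$ is the Galois closure of $M$ over $K$ and, in the framework that drives the rest of the paper, $M$ is already Galois over $K$ (for instance whenever each $L_i/K$ is Galois, a compositum of Galois extensions being Galois), we have $L = M$ and therefore $\Gal(L/M) = 1$. The only point worth flagging is this final identification $L = M$: in the fully general non-Galois setting, one must observe that enlarging the family of $L_i$'s to include their $\Gal(\overline{K}/K)$-conjugates does not alter the $\PHNP$ statement under study (norms being invariant under conjugation of the field), so one may assume $L = M$ without loss. Beyond that one remark, the lemma is an immediate consequence of the Galois correspondence and poses no real obstacle.
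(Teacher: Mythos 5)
Your identification $\bigcap_i G^{(i)} = \Gal(L/M)$, where $M = L_1L_2\cdots L_n$ is the compositum, is correct and is a more transparent route than the paper's. The paper argues by contradiction: a nontrivial $g$ fixing every $L_i$ would give a proper subfield of $L$ (the fixed field of $\langle g\rangle$) containing all the $L_i$, from which it concludes that $L$ is not the minimal Galois closure. In the setting where the lemma is actually invoked (Section 9, where every $L_i/K$ is assumed Galois, so $M/K$ is Galois and $L=M$), your first paragraph is a complete proof and makes explicit exactly what the paper's argument uses implicitly.

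As a proof of the lemma \emph{as stated}, however — with no Galois hypothesis on the $L_i$ — your argument has a gap, and you have put your finger on precisely the sore spot. The reduction you propose (adjoin the conjugates $\sigma(L_i)$ to the family, assert that $\PHNP$ is unchanged, and thereby assume $L=M$) is not proved and is not obviously true: adding a conjugate field adds a coordinate to the projective norm problem, and although the local and global norm groups of $L_i$ and $\sigma(L_i)$ coincide, deducing the enlarged $\PHNP$ from the original one requires producing a global element of $\sigma(L_i)$ with a prescribed norm, i.e.\ an $\HNP$-type input; no such equivalence is established in the paper (compare the care needed even for Propositions \ref{outfilter} and \ref{same}). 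Moreover, your own reformulation shows that the unqualified statement cannot be rescued: $\bigcap_i G^{(i)} = \Gal(L/M)$ is nontrivial whenever $M \neq L$, for instance $K=\mathbb{Q}$, $L_1=\mathbb{Q}(\sqrt[3]{2})$, $L_2=\mathbb{Q}(\sqrt[3]{3})$, where complex conjugation lies in $G^{(1)}\cap G^{(2)}$ while $L$ is the degree-$18$ Galois closure of the degree-$9$ compositum. The paper's own proof has the same defect: the fixed field of $\langle g\rangle$ contains every $L_i$ and is proper in $L$, but it need not be Galois over $K$, so the minimality of $L$ as a \emph{Galois} closure is not contradicted (only the normal core of $\Gal(L/M)$ must vanish). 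So your argument is the right one in the Galois case the paper actually needs (Theorem \ref{abelianfinish} and the dihedral and $p^3$ theorems), but the general case needs either an explicit hypothesis that the compositum is Galois over $K$ or an actual proof of your ``without loss of generality'' step — it cannot simply be flagged.
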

\begin{proof}
    Suppose a nontrivial element $g$ exists such that $g \in G^{(i)}$ for all $i$, then let $G' = \langle g \rangle$. Since $|G/G'| <  |G|$ is an automorphism group of $L$ fixing each of the subfields $L_1, L_2, \ldots, L_n$ and strictly smaller than $\Gal(L/K)$, it follows that $L$ is not the minimal Galois closure of $L_1, L_2, \ldots, L_n$, giving a contradiction.
\end{proof}

\section{Methodology}
\label{sec:methodology}
Classically, the Hasse norm theorem associated with a Galois field extension $L/K$ is related to the cohomology of norm-one tori as follows. Let $\mathrm{R}$ and $\mathrm{R}^{(1)}$ denote the Weil restriction of scalar and the norm one tori respectively. Let $\mathbf{T}^1_{L/K} = \mathrm{R}_{L/K}^{(1)}\mathbb{G}_m$. Taking the cohomology of the sequence of $\mathrm{Gal}(L/K)$ modules 

\[1\rightarrow \mathbf{T}^1_{L/K}(L)\rightarrow \mathbf{R}_{L/K}\mathbb{G}_m(L)\cong (L\times L)^\times \rightarrow \mathbb{G}_m(L)\cong L^\times\rightarrow 1,\]

we get

\adjustbox{max width = 5.8in}{
\begin{tikzcd}
1 \arrow[r]\arrow[d] & \mathbf{T}^1_{L/K}(K) \arrow[r]\arrow[d] & L^{\times} \arrow[r, "N_{L/K}"] \arrow[d] & K^{\times} \arrow[r] \arrow[d] & H^1(L/K,  \mathbf{T}^1_{L/K}(L)) \arrow[d, "r"] \\ 1 \arrow[r] & \displaystyle\prod_v  \mathbf{T}^1_{L/K}(K_v)  \arrow[r] &\displaystyle \prod_{w \mid v} L_w^{\times} \arrow[r] &\displaystyle \prod_v K_v^{\times} \arrow[r] & \displaystyle\prod_{v \mid p} H^1(L_w/K_v, \mathbf{T}^1_{L/K}(L_w)),
\end{tikzcd}
}

\noindent where $v$ is taken over all places in $K$ and $w$ runs over places of $L$ dividing $v$. In the diagram, the Hasse Norm Principle is equivalent to injectivity of the map $r$. We can do something analogous for the projective norm.\\

Recall that now $\tilde{L} = \bigoplus_{i=1}^n L_i$, where $L_i/K$ is a field extensions and $L$ is the Galois closure of the composite field $L_1\cdots L_n$. The projective norm can be extended to a map between algebraic tori $\mathbb{G}_m \times \prod R_{L_i/K} \mathbb{G}_m \to (\mathbb{G}_m)^n$. Let  $\mathbf{T}$ be the kernel of this map. We now have an exact sequence 
\[1\rightarrow\mathbf{T}(L) \rightarrow \mathbb{G}_m\times\prod_{i}\mathrm{R}_{L_i/K}\mathbb{G}_m (L) \overset{PN}{\rightarrow} L^n \rightarrow 1.\]

The Galois cohomology of that sequence  yields

\[\begin{tikzcd}
    1 \arrow[r] & \mathbf{T}(K) \arrow[r] & K^{\times} \times \bigoplus_{i=1}^n L_i^{\times} \arrow[r, "PN"] & (K^{\times})^n \arrow[r] & H^1(L/K, \mathbf{T}(L)).
\end{tikzcd}\]

Now, taking the injections $K \to K_v$ yields the commutative diagram


\adjustbox{max width = 5.8in}{
\begin{tikzcd}
    1 \arrow[r]\arrow[d] & \mathbf{T}(K) \arrow[r]\arrow[d] & K^{\times} \! \times \bigoplus_{i=1}^n L_i^{\times} \arrow[r, "PN"]\arrow[d] & (K^{\times})^n \arrow[r, "\delta"]\arrow[d, "\pi"] & H^1(L/K, \mathbf{T}(L)) \arrow[d] \\
    1 \arrow[r] & \displaystyle\prod_v \mathbf{T}(K_v) \arrow[r] & \displaystyle\prod_{w \mid v} K_v^{\times} \! \times \bigoplus_{i=1}^n L_{i,w}^{\times} \arrow[r, "\gamma"] & \displaystyle\prod_v (K_v^{\times})^n \arrow[r, "\eta"] &\displaystyle \prod_{w \mid v} H^1(L_w/K_v, \mathbf{T}(L_w))
\end{tikzcd}
}

where the maps $H^1(L/K, \mathbf{T}(L)) \to H^1(L_w/K_v, \mathbf{T}(L_w))$ are restriction maps.

\begin{prop}
$\PHNP_K(L_1, L_2, \ldots, L_n)$ is true if and only if $H^1(L/K, \mathbf{T}(L)) \to \prod_{w\mid v} H^1(L_w/K_v, \mathbf{T}(L_w))$ is injective.
\end{prop}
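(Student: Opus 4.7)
My plan is a two-direction diagram chase on the commutative diagram displayed just before the statement, using exactness of its rows (plus Hilbert 90 on one square) to pass between the arithmetic statement (PHNP) and the cohomological one (injectivity of the restriction map $r \colon H^1(L/K,\mathbf{T}(L)) \to \prod_{w\mid v} H^1(L_w/K_v,\mathbf{T}(L_w))$).

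First I would record two preliminary facts. Fact one: by the definition of $\mathbf{T}$ and the exactness of Galois cohomology, the map $\delta \colon (K^\times)^n \to H^1(L/K,\mathbf{T}(L))$ is surjective; indeed, the next term in the top long exact sequence is $H^1(L/K,\mathbb{G}_m\times \prod_i \mathrm{R}_{L_i/K}\mathbb{G}_m)$, which is trivial by Hilbert 90 together with Shapiro's lemma applied to each $\mathrm{R}_{L_i/K}\mathbb{G}_m$. Fact two: PHNP is equivalent, via the surjectivity of $\mathrm{PN}$ onto its image and exactness of the bottom row, to the statement that every $e = (e_1,\dots,e_n) \in (K^\times)^n$ whose image $\pi(e) \in \prod_v (K_v^\times)^n$ lies in $\ker(\eta) = \mathrm{Im}(\gamma)$ already lies in $\mathrm{Im}(\mathrm{PN})$ globally, i.e.\ in $\ker(\delta)$. (Matching the adelic $\mathbb{A}_K^\times \times \mathbb{A}_L^\times$ in the definition of PHNP with the place-wise product used in the diagram is harmless, since for almost every $v$ an adelic unit gives a valid local preimage; I would mention this but not belabor it.)

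With these in place, the forward direction ($r$ injective $\Rightarrow$ PHNP) goes: take $e \in E^\times \cap \mathrm{PN}(\mathbb{A}_K^\times\times \mathbb{A}_L^\times)$, so $\pi(e) \in \ker(\eta)$; by commutativity of the rightmost square $r(\delta(e)) = \eta(\pi(e)) = 0$, hence $\delta(e) = 0$ by injectivity of $r$, hence $e \in \mathrm{Im}(\mathrm{PN})$ by exactness of the top row. The reverse direction (PHNP $\Rightarrow r$ injective) uses Fact one: given $\xi \in \ker(r)$, lift it via the surjective $\delta$ to some $e \in (K^\times)^n$; then $\eta(\pi(e)) = r(\xi) = 0$, so $\pi(e) \in \mathrm{Im}(\gamma)$, so by PHNP $e \in \mathrm{Im}(\mathrm{PN})$, whence $\xi = \delta(e) = 0$.

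I expect no real obstacle: the proof is a standard snake-style chase that depends only on the exactness of both rows, commutativity of the rightmost square, and the vanishing $H^1(L/K,\mathrm{R}_{L_i/K}\mathbb{G}_m)=0$ that makes $\delta$ surjective. The only place to be careful is the routine compatibility between adelic points and place-wise products when translating the adelic definition of PHNP into the language of the diagram; I would handle this in one line by noting that a global $e \in E^\times = (K^\times)^n$ lies in the image of the local $\mathrm{PN}$ adelically iff it does so at every place, which is exactly what exactness of the bottom row records.
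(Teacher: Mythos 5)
Your proof is correct and follows essentially the same route as the paper's: a diagram chase on the displayed commutative diagram, using exactness of the rows and commutativity of the rightmost square to identify $\mathrm{PHNP}$ with triviality of $\ker(r)$. The only difference is that you explicitly justify the surjectivity of $\delta$ via Hilbert 90 and Shapiro's lemma (needed for the direction $\mathrm{PHNP}\Rightarrow r$ injective), a point the paper leaves implicit, so your write-up is if anything slightly more complete.
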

\begin{proof}
One can note that $\PHNP_K$ is equivalent to the statement: for all $\alpha \in (K^{\times})^n$, $\alpha \in \mathrm{Im}(PN) \Longleftrightarrow \pi(\alpha) \in \mathrm{Im}(\gamma)$. Using the exact sequences, this is equivalent to the statement $\delta(\alpha) = 1 \Longleftrightarrow \alpha \in \mathrm{Ker}(\delta) \Longleftrightarrow \pi(\alpha) \in \mathrm{Ker}(\eta) \Longleftrightarrow \eta(\pi(\alpha)) = 1$ for all $\alpha \in (K^{\times})^n$. Since the diagram is commutative, this is equivalent to the kernel of the map $H^1(L/K, \mathbf{T}(L)) \to \prod_{w\mid v} H^1(L_w/K_v, \mathbf{T}(L_w))$ being trivial, as desired.
\end{proof}

We have shown that  $\PHNP_K(L_1,\cdots, L_n)$ is true if and only if the first Tate-Shafarevich group $\Sh^1(L/K, \mathbf{T})$ is trivial. It is known that this group is finite, but it is hard to compute the cohomology of $\mathbf{T}(K)$ directly, so we make use of Tate-Nakayama duality.

Using the functor $\mathbf{X}^\star$ mapping a torus to its character lattice, we get  an easier way to compute, both mathematically and programmatically, the first Tate-Shafarevich group of an algebraic tori. The structure of these lattices are closely linked to the Galois group of the field extension, and oftentimes can easily be expressed in a closed form in terms of this Galois group.

\begin{theorem}[Tate-Nakayama theorem, {\cite[Theorem 6.2]{pr}}]
Let $K$ be a global field, let $\mathbf{T}$ be an algebraic torus defined over $K$ and split over $L$, and let $\mathbf{X}^\star (\mathbf{T})$ be its character lattice. Then $\Sh^1(L/K, \mathbf{T}) \cong \Sh^2(L/K, \mathbf{X}^\star(\mathbf{T}))$.
\end{theorem}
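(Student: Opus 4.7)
The plan is to use Tate-Nakayama duality in the global setting, treating the idèle class group $C_L$ as a class formation module for $\Gal(L/K)$. The statement is exactly \cite[Theorem 6.2]{pr}, so I will outline the strategy rather than reproduce every calculation.

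First I would start from the canonical short exact sequence of $\Gal(L/K)$-modules
$$1 \to L^\times \to J_L \to C_L \to 1,$$
where $J_L = \mathbb{A}_L^\times$. Since $\mathbf{T}$ splits over $L$, it is canonically isomorphic over $L$ to $\mathbf{X}_\star(\mathbf{T}) \otimes_{\mathbb{Z}} \mathbb{G}_m$, where $\mathbf{X}_\star(\mathbf{T}) = \mathrm{Hom}(\mathbf{X}^\star(\mathbf{T}), \mathbb{Z})$ is the cocharacter lattice. Tensoring the above sequence with the free $\mathbb{Z}$-module $\mathbf{X}_\star(\mathbf{T})$ preserves exactness and yields
$$1 \to \mathbf{T}(L) \to \mathbf{T}(\mathbb{A}_L) \to \mathbf{X}_\star(\mathbf{T}) \otimes C_L \to 1.$$
Taking the long exact sequence of $G$-cohomology and using Shapiro's lemma to rewrite $H^i(G, \mathbf{T}(\mathbb{A}_L))$ as a restricted product $\prod_v H^i(G_v, \mathbf{T}(L_w))$ over decomposition groups at places of $K$ realizes $\Sh^1(L/K, \mathbf{T})$ as the cokernel
$$\mathrm{coker}\bigl(H^0(G, \mathbf{T}(\mathbb{A}_L)) \to H^0(G, \mathbf{X}_\star(\mathbf{T}) \otimes C_L)\bigr),$$
since the map $H^1(G, \mathbf{T}(L)) \to H^1(G, \mathbf{T}(\mathbb{A}_L))$ now coincides with the localization map whose kernel defines $\Sh^1$.

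Next I would invoke the Tate-Nakayama theorem for the class formation $C_L$: cup product with the fundamental class $u_{L/K} \in H^2(G, C_L)$ induces isomorphisms
$$\hat{H}^i(G, \mathbf{X}_\star(\mathbf{T})) \xrightarrow{\sim} \hat{H}^{i+2}(G, \mathbf{X}_\star(\mathbf{T}) \otimes C_L)$$
for every $i$, and the corresponding local versions (using each $L_w^\times$ as class formation over $K_v$) identify the adelic piece. Together they convert the cokernel above into a Tate cohomology group of the cocharacter lattice. Finally, the $\mathbb{Z}$-linear duality between $\mathbf{X}_\star(\mathbf{T})$ and $\mathbf{X}^\star(\mathbf{T})$, obtained from $0 \to \mathbb{Z} \to \mathbb{Q} \to \mathbb{Q}/\mathbb{Z} \to 0$ by applying $\mathrm{Hom}(-, \mathbb{Z})$, translates this group into $\Sh^2(L/K, \mathbf{X}^\star(\mathbf{T}))$ up to Pontryagin duality; since both Sha groups are finite abelian, they are non-canonically isomorphic.

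The main obstacle is purely bookkeeping: carefully matching Tate cohomology indices through the cup-product isomorphisms, and tracking the Ext/Tor dualities between $\mathbf{X}_\star$ and $\mathbf{X}^\star$ throughout the chain so that $\Sh^1$ on the torus side is correctly paired with $\Sh^2$ on the character-lattice side. Each individual ingredient (Shapiro, the class formation axioms, cup products, and the $\mathbb{Z}/\mathbb{Q}/\mathbb{Q}\!/\!\mathbb{Z}$ Ext trick) is classical, but assembling them cleanly is technical, so I would simply refer the reader to \cite[Ch.~6]{pr} for the complete proof.
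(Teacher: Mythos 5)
The paper does not prove this statement at all---it is quoted directly from Platonov--Rapinchuk \cite{pr} as a known theorem---so there is no internal argument to compare against, and your sketch is precisely the standard class-formation proof found in that reference: tensor $1\to L^\times\to \mathbb{A}_L^\times\to C_L\to 1$ with the cocharacter lattice, identify $\Sh^1(L/K,\mathbf{T})$ via the long exact sequence and Shapiro's lemma, apply the Nakayama--Tate cup-product isomorphisms with the fundamental class both globally and locally, and then dualize $\mathbf{X}_\star(\mathbf{T})$ against $\mathbf{X}^\star(\mathbf{T})$ through $0\to\mathbb{Z}\to\mathbb{Q}\to\mathbb{Q}/\mathbb{Z}\to 0$. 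Your outline is correct, and you rightly flag the one genuine subtlety: the argument naturally produces a perfect pairing, i.e.\ $\Sh^1(L/K,\mathbf{T})\cong \Sh^2(L/K,\mathbf{X}^\star(\mathbf{T}))^{\lor}$, with the isomorphism as stated following only because both groups are finite abelian.
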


Our approach will therefore be to give an explicit description of the character lattice of $\mathbf{T}$ and compute $\Sh^2(L/K, \mathbf{X}^\star(\mathbf{T}))$.

\section{Preliminary Results}

\label{sec:elementary}
Thought closely related, neither $\HNP$ nor $\PHNP$ directly imply one another, so current results do not extend to the $\PHNP$ condition easily. Thus, we aim to draw connections between the conditions. It turns out that in some cases, we can directly construct a global solution from the local solutions of $\PHNP$ when $\HNP$ holds.

\begin{definition}[$v$-local solution, scale factor] Fix a place $v$ of $K$. 
For a given $n$-tuple $k_1, k_2, \ldots, k_n \in K^n$, if there exists $\ell_i \in L_i \otimes K_v$ and a constant $s_v \in K_v$ such that $N_{L_i/K}(\ell_i) = s_v \cdot k_i$ we call  $(\ell_1,\ell_1,\ldots, \ell_n; s_v)$ a $v$\emph{-local solution} (relative to $k_1,\dots, k_n$) with \emph{scale factor} $s_v$.
\end{definition}

Note that the $\PHNP$ is equivalent to the assertion that if there exists a $v$-local solution for every $v$, then there exists a global solution.

\begin{prop}
    Suppose that $\HNP_K(L_i)$ holds for $i=1,\dots, n$ and let $k_1,\dots,k_n\in K^n$. If there there some $s\in K$ that is a scale factor for all local solutions, then there exists a global solution.   
\end{prop}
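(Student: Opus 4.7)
The plan is to reduce the statement to $n$ independent applications of the classical Hasse Norm Principle, one for each constituent field $L_i$. The key point is that the hypothesis supplies a single globally rational scalar $s \in K^\times$ which simultaneously serves as the scale factor in every local solution. This is much stronger than the bare PHNP hypothesis, where one only has local scale factors $s_v \in K_v^\times$ which need not glue.

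Concretely, I would argue as follows. Fix $s \in K^\times$ as in the hypothesis and fix an index $i \in \{1, \dots, n\}$. For every place $v$ of $K$, by assumption there exists a $v$-local solution $(\ell_{1,v}, \dots, \ell_{n,v}; s)$; the $i$th component gives $\ell_{i,v} \in (L_i \otimes_K K_v)^\times$ with $N_{L_i/K}(\ell_{i,v}) = s \cdot k_i$. Hence the element $s k_i \in K^\times$ is a local norm from $L_i$ at every completion of $K$. Invoking $\HNP_K(L_i)$, there exists a global element $\ell_i \in L_i^\times$ with $N_{L_i/K}(\ell_i) = s k_i$. Doing this for each $i$ assembles into a global solution $(\ell_1, \dots, \ell_n; s)$ with the prescribed scale factor.

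There is no real obstacle here: the proposition is essentially a clean unpacking of definitions once one observes that a uniform $s \in K^\times$ decouples the $n$ coordinates, reducing a multi-field local-global problem to $n$ single-field ones. The only thing to note is that the statement $k_1, \dots, k_n \in K^n$ should be read as $k_1, \dots, k_n \in K^\times$ (otherwise the notion of norm is vacuous or trivial on some coordinates, and one handles those cases by setting $\ell_i = 0$ or ignoring them). The genuinely difficult regime -- and the one motivating the remainder of the paper -- is the case where the local scale factors $s_v$ are only defined locally and cannot be adjusted to a common element of $K$; that is precisely where the cohomological obstruction in $\Sh^1(L/K, \mathbf{T})$ becomes nontrivial.
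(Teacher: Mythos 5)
Your proof is correct and follows essentially the same route as the paper: observe that the common global scale factor $s$ makes $sk_i$ a local norm from $L_i$ at every place, apply $\HNP_K(L_i)$ to each coordinate separately, and assemble the resulting global elements into a global solution. The paper's proof is just a terser version of this same argument.
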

\begin{proof}
    By definition of scale factors, we get that $sk_i$ is a local norm of $L_i/K$ and the Hasse Norm Principle for $L_i/K$ holds hence there is $x_i\in L_i$ so that $N_{L_i/K}(\ell_i) = sk_i$. The tuple $(\ell_1,\dots, \ell_n)$ is therefore a global solution.  
\end{proof}

The proposition below further shows that we can always reduce the projective Hasse norm principle to the case where all extensions $L_1/K, L_2/K, \ldots, L_n/K$ have degree greater than $1$.

\begin{prop}\label{outfilter} Let $n\geq 2$. 
We have: $$\PHNP_K(L_1, L_2, \ldots, L_n, K) = \PHNP_K(L_1, L_2, \ldots, L_n).$$
\end{prop}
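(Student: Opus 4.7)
The plan is to prove both directions of the equality directly in the scale-factor formulation introduced just above, exploiting the fact that $N_{K/K}$ is the identity map, so the trailing $K$-factor imposes no genuine norm condition.

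For the forward direction ($\PHNP_K(L_1,\ldots,L_n,K)\Rightarrow \PHNP_K(L_1,\ldots,L_n)$), I would start with a tuple $(k_1,\ldots,k_n)\in (K^\times)^n$ admitting a $v$-local solution $(\ell_1^v,\ldots,\ell_n^v; s_v)$ at every place $v$, and extend it to a $v$-local solution of the $(n+1)$-tuple $(k_1,\ldots,k_n,1)$ by setting $\ell_{n+1}^v := s_v \in K_v^\times$; this is admissible because $N_{K/K}(\ell_{n+1}^v)=s_v = s_v\cdot 1$. Applying $\PHNP_K(L_1,\ldots,L_n,K)$ then yields a global solution, whose first $n$ components provide the desired global solution for the original $n$-tuple.

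For the backward direction, I would take a tuple $(k_1,\ldots,k_n,k_{n+1})\in (K^\times)^{n+1}$ with $v$-local solutions and simply discard the final coordinate, which produces $v$-local solutions of $(k_1,\ldots,k_n)$ with the same scale factors $s_v$. Then $\PHNP_K(L_1,\ldots,L_n)$ produces a global $s\in K^\times$ and $\ell_i\in L_i^\times$ with $N_{L_i/K}(\ell_i)=sk_i$, and setting $\ell_{n+1}:=s k_{n+1}\in K^\times$ automatically satisfies $N_{K/K}(\ell_{n+1})=sk_{n+1}$, completing the global solution for the $(n+1)$-tuple.

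There is no serious obstacle here; the proposition essentially records the vacuity of adjoining a trivial extension, and the proof is straightforward bookkeeping of scale factors. As a cohomological sanity check, the kernel torus $\mathbf{T}'$ associated to $(L_1,\ldots,L_n,K)$ satisfies $\mathbf{T}'(K)\cong\{(\lambda,\ell_1,\ldots,\ell_n,\ell_{n+1}) : N_{L_i/K}(\ell_i)=\lambda \text{ and } \ell_{n+1}=\lambda\}$, and the projection forgetting $\ell_{n+1}$ is an isomorphism $\mathbf{T}'\cong \mathbf{T}$ of algebraic tori over $K$, so $\Sh^1(K,\mathbf{T}')\cong \Sh^1(K,\mathbf{T})$ and the two $\PHNP$ conditions are equivalent.
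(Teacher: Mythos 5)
Your proof is correct and follows essentially the same route as the paper's: both directions are handled by the same bookkeeping of scale factors, using $N_{K/K}=\mathrm{id}$ to drop or adjoin the trailing $K$-factor (extending local solutions via $\ell_{n+1}^v=s_v$ and the global one via $\ell_{n+1}=sk_{n+1}$). The closing cohomological identification $\mathbf{T}'\cong\mathbf{T}$ is a valid extra check not present in the paper, but the elementary argument already suffices.
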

\begin{proof} $(\Leftarrow)$ Assume $\PHNP_K(L_1, L_2, \ldots, L_n)$ holds. Let $k_1,\ldots, k_{n+1}\in K^\times$ such that $v$-local solutions exist for all $v$, relative to the extensions $L_1, \ldots, L_n, K$. This implies that $v$-local solutions of $(k_1,\ldots, k_n)$ exist for all $v$, relative to $L_1,\ldots, L_n$. We can use  $\PHNP_K(L_1, L_2, \ldots, L_n)$ to obtain $(\ell_1,\ldots, \ell_n)\in L_1\times\ldots\times L_n$ and $s\in K$ so that $s k_i = N_{L_i/K}(\ell_i)$ $k_i$ for all $1\leq i\leq n$. Since $N_{K/K}=\mathrm{id}$ we get that $(\ell_1,\ldots, \ell_n,sk_{n+1})$ is a global solution for $(k_1,\ldots, k_{n+1})$, relative to the extensions  $(L_1, L_2, \ldots, L_n, K)$.

$(\Rightarrow)$ Now assume that $\PHNP_K(L_1, L_2, \ldots, L_n, K)$ holds. If $k_1,\ldots, k_n\in K^\times$ have $v$-local solutions for all $v$, with respect to $L_1,\ldots, L_n$, then $k_1,\ldots, k_n,1$ has $v$-local solutions for all $v$ with respect to $L_1,\ldots, L_n, K$, then we can use $\PHNP_K(L_1, L_2, \ldots, L_n, K)$ to get a global solution and restrict it to its first factors to get a global solution relative to  $k_1,\ldots, k_n,1$ and the extensions $L_1,\ldots,L_n$.
This finishes the proof.
\end{proof}

Next, we introduce another reduction of $\PHNP$ to $\HNP$ when $L_1, L_2, \ldots, L_n$ are identical.

\begin{prop}
\label{same}
If $L_1 = L_2 = \cdots = L_n$ for $n \geq 2$, then we must have \[\PHNP_K(L_1, L_2, \ldots, L_n) = \HNP_K(L_1).\]
\end{prop}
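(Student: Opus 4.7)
The plan is to prove the claimed equality by establishing the two implications directly, bypassing the character-lattice/Tate-Nakayama machinery from \S\ref{sec:methodology}. Write $L = L_1 = \cdots = L_n$ and set $N := N_{L/K}$.

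For the direction $\PHNP_K(L,\ldots,L)\Longrightarrow \HNP_K(L)$, the idea is to encode an arbitrary local-norm datum for $L/K$ inside a projective $n$-tuple by padding with ones. Given $k\in K^\times$ that is a local norm at every place $v$, choose $\ell_{1,v}\in L\otimes K_v$ with $N(\ell_{1,v})=k$ and set $\ell_{i,v}=1$ for $i\geq 2$ and $s_v=1$; this is a $v$-local solution for the tuple $(k,1,\ldots,1)$. Invoking $\PHNP$ yields $(\ell_1,\ldots,\ell_n)\in L^{\times n}$ and $s\in K^\times$ with $N(\ell_1)=sk$ and $N(\ell_i)=s$ for $i\geq 2$; since $n\geq 2$, the ratio $N(\ell_1/\ell_2)=k$ exhibits $k$ as a global norm. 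The hypothesis $n\geq 2$ enters exactly here.

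For the reverse direction $\HNP_K(L)\Longrightarrow\PHNP_K(L,\ldots,L)$, given $(k_1,\ldots,k_n)\in(K^\times)^n$ with $v$-local solutions $(\ell_{1,v},\ldots,\ell_{n,v};s_v)$ at every place, cancel the scale factors by taking ratios: for each $i$,
\[
\frac{k_i}{k_1}=\frac{N(\ell_{i,v})}{N(\ell_{1,v})}=N\!\left(\frac{\ell_{i,v}}{\ell_{1,v}}\right)\in K_v^\times,
\]
so $k_i/k_1\in K^\times$ is a local norm of $L/K$ at every $v$. Applying $\HNP_K(L)$ to each ratio produces $m_i\in L^\times$ with $N(m_i)=k_i/k_1$ (take $m_1=1$). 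Setting $s:=1/k_1\in K^\times$ and $\ell_i:=m_i$ gives $N(\ell_i)=k_i/k_1=s\cdot k_i$, a global solution.

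The only subtle point — and the one place where care is required — is that the global scale factor must lie in $K^\times$, not merely in $L^\times$. This is handled tautologically in both directions: in the first implication the scale factor comes from the conclusion of $\PHNP$ and is thus in $K^\times$ by definition, while in the second the explicit choice $s=1/k_1$ is visibly in $K^\times$ because the ratios $k_i/k_1$ all lie in $K^\times$, so $\HNP$ may be applied without any further normalization in $L$.
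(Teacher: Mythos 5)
Your proof is correct and follows essentially the same route as the paper's: the forward direction pads the local norm datum to the tuple $(k,1,\ldots,1)$ and divides two of the resulting global norms, while the reverse direction cancels the local scale factors by taking ratios $k_i/k_j$, applies $\HNP_K(L_1)$ to each ratio, and uses $1/k_j$ as the global scale factor (the paper normalizes by $k_n$ rather than $k_1$, an immaterial difference). No gaps; nothing further is needed.
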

\begin{proof} $(\Rightarrow)$
Suppose $\PHNP_K(L_1, L_2, \ldots, L_n)$ holds. Let  $k \in K^\times$ such that $k$ belongs in the image of the norm of $L_1/K$ locally. Since $1$ is always in the image of the norm, there exists a global solution to $(k, 1, 1, \ldots, 1)$ with respect to $L_1,\ldots, L_n$. Therfore, there are $s\in K^\times$ and  $(\ell_1,\ldots \ell_n)\in L_1\times\cdots\times L_n$ so that $N_{L_1/K}(\ell_1) = sk$ and $N_{L_i/K}(\ell_i) = N_{L_1/K}(\ell_i) = s\cdot 1$. Therefore, $k= N_{L_1/K}(\ell_1/\ell_2)$. This implies that $\HNP_K(L_1)$ holds.

$(\Leftarrow)$ Assume that $ \HNP_K(L_1)$ holds. Assume that $k_1,\ldots, k_n\in K^\times$ locally scale to a norm of $L_1\times\cdots\times L_n$. This means that for all place $v$ there are $s_v\in K_v^\times$ and $\ell_1,\ldots, \ell_n\in L_1\otimes K_v$ so that $N_{L_1/K}(\ell_i) = s_vk_i$ for all $i\leq n$. This implies that $k_i/k_n = N_{L_1/K}(\ell_i/\ell_n)$. Using the Hasse norm principle, there are $x_1,\ldots, x_{n-1}\in L_1$ so that $N_{L_1/K}(x_i) = k_i/k_n$. Let $x_n = 1$. Then for all $1\leq i\leq n$ we have $N_{L_1/K}(x_i) = \frac{1}{k_n} k_i$, we can take $\frac{1}{k_n}$ as global scale factor.
\end{proof}

\begin{remark}
    \label{rmk:elliptic}
    Although the previous proposition examines a very restricted setting, it provides us with a very interesting application. In \cite{achter}, the authors establish a mass formula to count the size of isogeny classes of principally polarized abelian varieties over finite fields, weighted by the size of their respective automorphism groups. This extends to yield a similar formula for elliptic curves. The formula is very concrete, except one constant: the Tamagawa number of a specific global torus, which can be seen locally as the centralizer of the Frobenius element acting on the dual of Tate groups.

    This Tamagawa number can be written $\tau(\mathbf{T}) = \frac{|H^1(\mathbb{Q},\mathbf{X}^*(\mathbf{T}))|}{|\Sh^1(\mathbf{T})|}$, and the real difficulty in its calculation is to determine the denominator.

    One can apply the work done in \cite{achter} and apply it to a product of elliptic curves, and count either isogenous abelian surfaces, or modify the formula to restrict the count to other products of elliptic curves. In both cases, the same Tamagawa number computation arises. For one elliptic curve, the torus is a maximal torus of $\mathrm{GL}_2(\mathbb{Q})$ which is either split or a restriction of scalars and has a trivial Tamagawa number. For two elliptic curves, however, we get a maximal torus of $(\mathrm{GL}_2\times\mathrm{GL}_2)^0(\mathbb{Q})\subset \mathrm{GSp}_4(\mathbb{Q})$ where $(\mathrm{GL}_2\times\mathrm{GL}_2)^0$ is the group of pairs of matrices $(g_1,g_2)\in \mathrm{GL}_2$ with matching determinants. The most interesting case is when the torus is compact modulo center (the \emph{elliptic} case), and the corresponding Tate-Shafarevich group is exactly the obstruction to $\PHNP_K(L_1,L_2)$ where $L_i$ is generated by eigenvalues of $g_i$.
\end{remark}

\section{Description of Character Lattices}
\label{sec:description}
Recall that we have the following exact sequence between algebraic tori by mapping each element of $\mathbf{T}$ to its shared norm under $\prod_i N_{L_i/K}$:

\[\begin{tikzcd}
1 \arrow[r] & \prod_i R^{(1)}_{L_i/K} \mathbb{G}_m \arrow[r] & \mathbf{T} \arrow[r] & \mathbb{G}_m \arrow[r] & 1.
\end{tikzcd}\]

Let $\Lambda := \mathbf{X}^*(\mathbf{T})$ and $\Lambda^1 :=  \mathbf{X}^*(\prod_i R^{(1)}_{L_i/K} \mathbb{G}_m)$. From this, we can obtain a short exact sequence on the character lattice of each tori as in Example 5.1 by taking the dual of the previous exact sequence:

\begin{equation}\begin{tikzcd}
0 \arrow[r] & \mathbb{Z} \arrow[r] & \Lambda \arrow[r] & \Lambda^1 \arrow[r] & 0.
\end{tikzcd}  \label{shortZ}\end{equation}

We may now describe the character lattice of $\mathbf{T}$.

\begin{prop}
    Let $S_i$ denote the left coset space $G/G^{(i)}$ equipped with a left $G$-action. Let $d_i = \sum_{g \in S_i}g \in \mathbb{Z}[S_i]$ for each $i$. Then it follows that $$\mathbf{X}^*(\mathbf{T}) \cong \left(\prod_i \mathbb{Z}[S_i] \right)/\left\lbrace(\lambda_1d_1, \lambda_2d_2, \ldots, \lambda_nd_n) : \lambda_i \in \mathbb{Z}, \sum_i \lambda_i = 0\right\rbrace.$$
    \label{fullchar}
\end{prop}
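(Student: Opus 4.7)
The plan is to read off $\mathbf{X}^\star(\mathbf{T})$ directly from the defining exact sequence of $\mathbf{T}$, then reorganize the resulting presentation into the form stated. First, I observe that $\mathbf{T}$ fits into the short exact sequence of $K$-tori
\[1 \longrightarrow \mathbf{T} \longrightarrow \mathbb{G}_m \times \prod_{i=1}^n \mathrm{R}_{L_i/K}\mathbb{G}_m \overset{PN}{\longrightarrow} \prod_{i=1}^n \mathbb{G}_m \longrightarrow 1,\]
with $PN(k,\ell_1,\ldots,\ell_n) = (k^{-1}N_{L_1/K}(\ell_1),\ldots,k^{-1}N_{L_n/K}(\ell_n))$. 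Surjectivity on $\overline{K}$-points is immediate from the surjectivity of each $N_{L_i/K}$ on $\overline{K}$-points (take $k=1$, for example).

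Next, I apply the exact contravariant functor $\mathbf{X}^\star$ (all tori in sight split over the finite extension $L$) to obtain the exact sequence of $G$-modules
\[0 \longrightarrow \mathbb{Z}^n \overset{PN^\star}{\longrightarrow} \mathbb{Z} \oplus \prod_{i=1}^n \mathbb{Z}[S_i] \longrightarrow \mathbf{X}^\star(\mathbf{T}) \longrightarrow 0,\]
using the standard identification $\mathbf{X}^\star(\mathrm{R}_{L_i/K}\mathbb{G}_m) \cong \mathbb{Z}[S_i]$ as $G$-modules. To compute $PN^\star(e_i)$, I trace the character $e_i \circ PN$: it factors as the $(-1)$-character on $\mathbb{G}_m$ times the norm character on $\mathrm{R}_{L_i/K}\mathbb{G}_m$, and the latter pulls back under $\mathrm{R}_{L_i/K}\mathbb{G}_m \otimes \overline{K} \cong \prod_{g\in S_i}\mathbb{G}_m$ to $d_i = \sum_{g\in S_i}g \in \mathbb{Z}[S_i]$. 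Hence
\[PN^\star(e_i) = (-1,\, 0,\ldots,0,\, d_i,\, 0,\ldots,0),\]
with $d_i$ in the $i$-th slot of $\prod_j\mathbb{Z}[S_j]$.

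From here the claimed description follows by a reorganization. I define a $G$-equivariant map $\phi : \prod_i \mathbb{Z}[S_i] \to \mathbf{X}^\star(\mathbf{T})$ by sending $(x_1,\ldots,x_n)$ to the class of $(0,x_1,\ldots,x_n)$. Surjectivity follows from the identity
\[(m,x_1,\ldots,x_n) + m\cdot PN^\star(e_1) = (0,\, x_1+md_1,\, x_2,\ldots, x_n),\]
so every class is represented by an element with vanishing first coordinate. For the kernel, a tuple $(x_1,\ldots,x_n)$ lies in $\ker\phi$ iff $(0,x_1,\ldots,x_n) = \sum_i \lambda_i\, PN^\star(e_i) = (-\sum_i\lambda_i,\lambda_1 d_1,\ldots,\lambda_n d_n)$ for some integers $\lambda_i$, which forces $\sum_i\lambda_i = 0$ and $x_i = \lambda_i d_i$. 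That subgroup is precisely the one appearing in the proposition.

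The main points requiring care are the signs and placements of the $d_i$'s in the formula for $PN^\star$, and the observation that the relation subgroup $N = \{(\lambda_1d_1,\ldots,\lambda_nd_n) : \sum_i\lambda_i=0\}$ is indeed a $G$-submodule (immediate, since each $d_i$ is $G$-fixed), so that $\phi$ descends to a genuine isomorphism of $G$-modules. Beyond this bookkeeping, the argument is a formal consequence of the character-lattice dictionary for split tori; no Galois-cohomology machinery is invoked at this stage.
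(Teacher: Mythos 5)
Your proof is correct, and it reaches the stated presentation by a route that differs from the paper's in its details. The paper starts from the embeddings $\mathbf{T}^1\subset\mathbf{T}\subset\mathrm{R}_{\tilde{L}/K}\mathbb{G}_m$, which dualize to a surjection $\varphi:\bigoplus_i\mathbb{Z}[S_i]\to\mathbf{X}^\star(\mathbf{T})$; it then shows the subgroup $N=\{(\lambda_1d_1,\ldots,\lambda_nd_n):\sum_i\lambda_i=0\}$ lies in $\ker\varphi$ by evaluating the character $\sum_i\lambda_id_i$ on points of $\mathbf{T}$, whose coordinates share a common norm $\eta$, and concludes $N=\ker\varphi$ by a rank count. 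You instead dualize the defining short exact sequence $1\to\mathbf{T}\to\mathbb{G}_m\times\prod_i\mathrm{R}_{L_i/K}\mathbb{G}_m\overset{PN}{\to}\mathbb{G}_m^n\to 1$ (after checking surjectivity of $PN$ on $\overline{K}$-points, which is what makes the dualized sequence exact), obtain $\mathbf{X}^\star(\mathbf{T})$ as the cokernel of the explicit map $e_i\mapsto(-1,0,\ldots,d_i,\ldots,0)$, and then eliminate the $\mathbb{Z}$ summand to land on the stated quotient of $\bigoplus_i\mathbb{Z}[S_i]$. The substance (the character-lattice dictionary and the identification of the norm character with $d_i$) is the same, but your version determines the kernel of the presentation exactly by direct elimination, whereas the paper's rank-check, as written, still tacitly needs $N$ to be saturated (equivalently, the quotient to be torsion-free) to upgrade "finite index in $\ker\varphi$" to equality --- an easy but unstated verification that your argument renders unnecessary. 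The paper's route, on the other hand, is shorter and simultaneously sets up the comparison with $\mathbf{X}^\star(\mathbf{T}^1)=\bigoplus_i\mathbb{Z}[S_i]/\langle d_i\rangle$ that is reused later; your remark that each $d_i$ is $G$-fixed correctly ensures the relation subgroup is a $G$-submodule, so both arguments give the isomorphism as $G$-modules.
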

\begin{proof}

    The embeddings $\mathbf{T}^1(L)\subset \mathbf{T}(L)\subset \mathrm{R}_{L/K}\mathbb{G}_m(L)$ yield the surjections \[\bigoplus_{i}\mathbb{Z}[S_i]=\mathbf{X}^\star(\mathrm{R}_{L/K}\mathbb{G}_m)\overset{\varphi}{\rightarrow}\mathbf{X}^\star(\mathbf{T})\rightarrow\mathbf{X}^\star(\mathbf{T}^1) =\bigoplus_{i}\mathbb{Z}[S_i]/\langle d_i\rangle. \]
    
    Note that  $d_i$ corresponds at the norm map, viewed as a character, and since elements of $\mathbf{T}(L)$ are tuples $\ell = (\ell_1,\ldots, \ell_n)$ with matching norm $\eta$, we get that an element $\lambda_1 d_1\oplus \cdots \oplus \lambda_n d_n$ maps $\ell$ to 
    \[\prod_{i=1}^n N_{L_i/K}(\ell_i)^{\lambda_i} = \eta^{\sum_{i=1}^n \lambda_i}.\]
We just showed that $\{(\lambda_1d_1, \lambda_2d_2, \ldots, \lambda_nd_n) : \lambda_i \in \mathbb{Z}, \sum_i \lambda_i = 0\}$ is in the kernel of the surjection $\varphi$. By rank-checking, we may conclude that this is exactly the kernel of $\varphi$ and we are done.
\end{proof}
In the particular case where each extension is Galois, we get the following. 
\begin{cor} 
\label{charstructure}
    Suppose $L_1, L_2, \ldots, L_n$ are all Galois over $K$. Let $d_i = \sum_{g \in G_i} g \in \mathbb{Z}[G_i]$ for all $i$. Then $ \mathbf{X}^*(\mathbf{T}) \cong (\prod_i \mathbb{Z}[G_i])/\{(\lambda_1 d_1, \lambda_2 d_2, \ldots, \lambda_n d_n) : \lambda_i \in \mathbb{Z}, \sum_i \lambda_i = 0\}$. 
\end{cor}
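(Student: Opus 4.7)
The corollary is a direct specialization of Proposition \ref{fullchar} to the Galois setting, so the plan is to check that the formula for $\mathbf{X}^\star(\mathbf{T})$ in that proposition rewrites verbatim (after a canonical identification) to the one stated in the corollary.

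First I would invoke the hypothesis that each $L_i/K$ is Galois, which means $G^{(i)} = \mathrm{Gal}(L/L_i)$ is a \emph{normal} subgroup of $G$. Consequently the left coset space $S_i = G/G^{(i)}$ inherits a group structure, and the fundamental theorem of Galois theory gives a canonical isomorphism $S_i \cong G_i = \mathrm{Gal}(L_i/K)$ of $G$-sets, where $G$ acts on $G_i$ through the quotient map $G \twoheadrightarrow G/G^{(i)} \xrightarrow{\sim} G_i$. Extending $\mathbb{Z}$-linearly, this produces a $\mathbb{Z}[G]$-module isomorphism $\mathbb{Z}[S_i] \cong \mathbb{Z}[G_i]$, and under it the distinguished element $d_i = \sum_{g\in S_i} g$ of Proposition \ref{fullchar} is carried precisely to $\sum_{g \in G_i} g$, which is the $d_i$ defined in the corollary.

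Next I would assemble these isomorphisms into the product $\prod_i \mathbb{Z}[S_i] \cong \prod_i \mathbb{Z}[G_i]$, noting that because each individual isomorphism sends the summand $\lambda_i d_i$ on the left to $\lambda_i d_i$ on the right, the subgroup $\{(\lambda_1 d_1,\ldots,\lambda_n d_n) : \lambda_i \in \mathbb{Z},\ \sum_i \lambda_i = 0\}$ appearing in the statement of Proposition \ref{fullchar} is identified with the corresponding subgroup appearing in the corollary. Quotienting by these matched subgroups then yields the claimed isomorphism of $\mathbb{Z}[G]$-modules.

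There is essentially no obstacle here: the only substantive point is the normality of $G^{(i)}$ in the Galois case, which makes the cosets into a group. Everything else is bookkeeping and relies on Proposition \ref{fullchar} having already done the work of identifying both the module and its relation submodule in the general (non-Galois) setting.
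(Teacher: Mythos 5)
Your proposal is correct and follows the same route as the paper: the paper's proof is exactly the one-line observation that when $L_i/K$ is Galois, the coset space $S_i = G/G^{(i)}$ is a group identified with $G_i$, so Proposition \ref{fullchar} specializes verbatim. You simply spell out the normality of $G^{(i)}$ and the matching of the elements $d_i$ under the identification, which the paper leaves implicit.
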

\begin{proof}
    When $L_i$ is Galois, then $S_i = G_i$ is a group. 
\end{proof}

This description lets us use SageMath to build the character lattice in an elementary way, using the SageMath package for algebraic tori implemented by the second author. Below is an example where $L_1,L_2$ are independent quadratic extensions of $K$.
\begin{center}\begin{tabular}{c}
     \begin{lstlisting}
def MakeLattice(G, H1, H2):
    L1 = GLattice(H1, 1)                                                                                                                                             
    G = KleinFourGroup()                                                                                                                                             
    H1, H2 = [G.subgroups()[1], G.subgroups()[2]]                                                                                                                    
    IL1 = GLattice(H1, 1).induced_lattice(G)                                                                                                                         
    IL2 = GLattice(H2, 1).induced_lattice(G)                                                                                                                         
    IL = IL1.direct_sum(IL2)                                                                                                                                         
    a, b = IL.fixed_sublattice().basis()
    HNPLattice = IL.quotient_lattice(IL.fixed_sublattice())                                                                                                          
    PHNPLattice = IL.quotient_lattice(IL.sublattice([a-b]))
    return [PHNPLattice, HNPLattice]

\end{lstlisting}\end{tabular}\end{center}
This code returns the pair $\left(\mathbf{X}^\star(\mathbf{T}), \bigoplus_{i=1}^n \mathbf{X}^\star(\mathbf{T}^1_{L_i/K})\right)$.\\

This description allows us to solve some cases algorithmically. 
\begin{prop}
    If $L_1, L_2$ are quadratic extensions, then $\PHNP_K(L_1, L_2)$ holds.
\end{prop}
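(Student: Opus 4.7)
The plan is to reduce $\PHNP_K(L_1, L_2)$ to the multinorm principle for the \'etale algebra $L_1 \oplus L_2$. First I would unpack the local data: given $(k_1, k_2) \in (K^\times)^2$ admitting $v$-local projective-norm solutions $(\ell_{1,v}, \ell_{2,v}; s_v)$ at every place $v$ of $K$, we have $N_{L_i/K}(\ell_{i,v}) = s_v k_i$ with $s_v \in K_v^\times$ (nonzero since the $k_i$ are). Dividing the two relations eliminates the scale factor:
\[k_1/k_2 \;=\; N_{L_1/K}(\ell_{1,v})\cdot N_{L_2/K}(\ell_{2,v}^{-1}),\]
so $k_1/k_2$ lies in $K^\times \cap N_{L_1/K}(\mathbb{A}_{L_1}^\times)\cdot N_{L_2/K}(\mathbb{A}_{L_2}^\times)$, i.e., $k_1/k_2$ is a local multinorm of $L_1\oplus L_2$ at every place.

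Next I would invoke Pollio's theorem, stated earlier in the paper, which gives $\Sh(L_1\oplus L_2/K)\cong \Sh(L_1\cap L_2/K)$ for finite abelian extensions (and any quadratic extension is abelian). If $L_1 \neq L_2$ then $L_1\cap L_2 = K$ and the right-hand side is trivial; if $L_1 = L_2 = L$ then the right-hand side is $\Sh(L/K)$, which vanishes by the Hasse norm theorem since $L/K$ is cyclic of degree $2$. Either way, the multinorm principle for $L_1\oplus L_2$ holds, so the local decomposition globalizes to $k_1/k_2 = N_{L_1/K}(\alpha)\cdot N_{L_2/K}(\gamma)$ for some $\alpha \in L_1^\times$ and $\gamma \in L_2^\times$.

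Finally, I would recover a global projective-norm solution by setting $\beta = \gamma^{-1}\in L_2^\times$ and $s = N_{L_1/K}(\alpha)/k_1 \in K^\times$: a short calculation using $k_1/k_2 = N_{L_1/K}(\alpha)/N_{L_2/K}(\beta)$ shows $N_{L_1/K}(\alpha) = s k_1$ and $N_{L_2/K}(\beta) = s k_2$, so $(\alpha,\beta;s)$ is the desired global solution. The main obstacle in this strategy is modest, since Pollio's theorem does the heavy lifting; the only delicate point is the bookkeeping when extracting a common scale factor from a global multinorm decomposition. An alternative approach more in keeping with the ``algorithmic'' spirit of the preceding paragraph would be to use Corollary \ref{charstructure} to describe $\mathbf{X}^\star(\mathbf{T})$ explicitly for $G = \mathrm{Gal}(L_1 L_2/K)$ (which is either $C_2$ or the Klein four group $V_4$) and verify $\Sh^2(L/K,\mathbf{X}^\star(\mathbf{T}))=0$ directly by a short cohomology computation, as facilitated by the SageMath snippet shown above.
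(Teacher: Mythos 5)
Your proof is correct, but it follows a genuinely different route from the paper. You first observe that for $n=2$ the projective condition is equivalent to a multinorm condition: dividing the two local relations $N_{L_i/K}(\ell_{i,v})=s_vk_i$ eliminates the scale factor and shows $k_1/k_2$ is everywhere locally a product of norms from $L_1$ and $L_2$, and conversely any global multinorm decomposition of $k_1/k_2$ can be repackaged (your $\beta=\gamma^{-1}$, $s=N_{L_1/K}(\alpha)/k_1$ bookkeeping, which checks out) into a global projective solution; you then quote Pollio's theorem $\Sh(L_1\oplus L_2/K)\cong\Sh(L_1\cap L_2/K)$ together with the Hasse norm theorem for the case $L_1=L_2$. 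The paper instead treats the case $L_1\neq L_2$ by building the character lattice of $\mathbf{T}$ for $G\cong(\mathbb{Z}/2\mathbb{Z})^2$ via Corollary \ref{charstructure} and verifying $\Sh^2(L/K,\mathbf{X}^\star(\mathbf{T}))=0$ with the SageMath package, and handles $L_1=L_2$ through Proposition \ref{same}; your closing remark correctly identifies this as the alternative. What your argument buys is a software-free, conceptual proof that in fact gives more than the quadratic case: the same reduction shows $\PHNP_K(L_1,L_2)$ holds whenever $L_1,L_2$ are abelian and $L_1\cap L_2$ satisfies $\HNP_K$, anticipating the spirit of Theorem \ref{abelianfinish}. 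What it costs is reliance on Pollio's external result, whereas the paper's computation deliberately exercises the character-lattice machinery that drives the rest of the article; also make sure to keep the (routine) remark that local data place-by-place assembles into id\`eles, since at almost all places one may choose unit factorizations, and that $s_v$ and the $\ell_{i,v}$ are implicitly taken to be units.
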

\begin{proof}

For the case where $L_1 \neq L_2$, we run the following SageMath code:

\vspace{-0.25cm}
\begin{center}
\begin{tabular}{c}
\begin{lstlisting}
sage: G = KleinFourGroup()                                                                                                                                             
sage: H1, H2 = [G.subgroups()[1], G.subgroups()[2]]                                                                                                                    
sage: PHNP = MakeLattice(G, H1, H2)[0]                                                                                                                        
sage: PHNP.Tate_Shafarevich_lattice(2)                                                                                                                                 
[]
\end{lstlisting}
\end{tabular}
\end{center}
\vspace{-0.08cm}

This demonstrates that $\PHNP$ holds for any pair of distinct quadratic extensions. We will see in  Proposition \ref{same} that both $\PHNP$   also holds if both quadratic extensions are the same. 
\end{proof}

Notably in the above proposition, all quadratic extensions are Galois and have abelian Galois groups. In later sections, we generalize this result.
\section{Counterexample to \texorpdfstring{$\PHNP \Longrightarrow \HNP$}{PHNP->HNP}}
\label{sec:counterexamplephnptohnp}
Though the two conditions may appear equivalent for lower degree choices of $L_i$, once we choose extensions of degree $4$ or higher, counterexamples appear quite often. To find our counterexamples as well as calculate cohomology groups, we used SageMath to compute the results using the code shown in Appendix \ref{code}.

In this section, we assert that $L_1, L_2$ are Galois. Using our result and notations from \ref{charstructure} and denoting the character lattice $\mathbf{X}^*(\mathbf{T})$ as $\Lambda$ and the lattice $\{(\lambda_i d_i) \mid \lambda_i \in \mathbb{Z}, \sum_i \lambda_i = 0\}$ as $\Lambda^1$, we have the following short exact sequence: 

\[\begin{tikzcd}
1 \arrow[r] & \Lambda^1 \arrow[r] & \prod_i \mathbb{Z}[G_i] \arrow[r] & \Lambda \arrow[r] & 1.
\end{tikzcd}\]

Using Galois cohomology, we can obtain the long exact sequence

\[\begin{tikzcd}[cramped, sep=small]
1 \arrow[r] & H^0(G, \Lambda^1) \arrow[r] & H^0(G, \prod_i \mathbb{Z}[G_i]) \arrow[r] & H^0(G, \Lambda) \arrow[r] & H^1(G, \Lambda^1) \arrow[r] & \cdots.
\end{tikzcd}\]

\begin{example}

In particular, we investigate the case where $L_1\subset L_2$ and each extension in the tower $L_2/L_1/K$ is quadratic. This boils down to picking $G = \mathbb{Z}/2\mathbb{Z} \times \mathbb{Z}/2\mathbb{Z}$, choosing $n = 2$, $G_1 = \mathbb{Z}/2\mathbb{Z}$, and $G_2 = \mathbb{Z}/2\mathbb{Z} \times \mathbb{Z}/2\mathbb{Z}$. Furthermore, define $G = G_2$ and $N = \Gal(L_2/L_1)$. We have that that $\Lambda^1 \cong \mathbb{Z}$, and $\prod_i \mathbb{Z}[G_i] = \mathbb{Z}[G] \times \mathbb{Z}[N]$. We can then compute the cohomology groups in the sequence near $H^2(G, \Lambda)$ using Shapiro's lemma (see \cite{milne1997class}, p.62):

\[H^i(G, \mathbb{Z}[G] \times \mathbb{Z}[N]) = H^i(G, \mathbb{Z}[G]) \times H^i(G, \mathbb{Z}[N]) = H^i(G/N, \mathbb{Z}) = \begin{cases}
    \mathbb{Z}/2\mathbb{Z} & 2 \mid i \\
    1 & 2 \nmid i
\end{cases}.\]

It is well known that $H^2(G, \mathbb{Z}) \cong G^{\text{ab}}$ and $H^3(G, \mathbb{Z}) \cong \mathbb{Z}/2\mathbb{Z}$ by Lyndon's formula (see \cite{lyndon}), giving us the following exact sequence:

\[\begin{tikzcd}[cramped, sep=small]
1 \arrow[r] & H^1(G, \Lambda) \arrow[r] & (\mathbb{Z}/2\mathbb{Z})^2 \arrow[r] & \mathbb{Z}/2\mathbb{Z} \arrow[r] & H^2(G, \Lambda) \arrow[r] & \mathbb{Z}/2\mathbb{Z} \arrow[r] & 1.
\end{tikzcd}\]

From this sequence, we know that we must have $H^1(G, \Lambda) \in  \{\mathbb{Z}/2\mathbb{Z}, (\mathbb{Z}/2\mathbb{Z})^2\}$ and $H^2(G, \Lambda) \in \{\mathbb{Z}/2\mathbb{Z}, (\mathbb{Z}/2\mathbb{Z})^2, \mathbb{Z}/4\mathbb{Z} \}$. By computing these cohomology groups explicitly in SageMath, we obtain that $H^1(G, \Lambda) \cong H^2(G, \Lambda) \cong \mathbb{Z}/2\mathbb{Z}$. 

We can now repeat the same process to compute these cohomology groups for two nontrivial subgroups $N, H$ where $H$ is a complement of $N$ in $G$. For $N$, we obtain

\[\begin{tikzcd}
1 \arrow[r] & H^1(N, \Lambda) \arrow[r] & \mathbb{Z}/2\mathbb{Z} \arrow[r] & 1 \arrow[r] & H^2(N, \Lambda) \arrow[r] & 1 \arrow[r] & 1.
\end{tikzcd}\]

So we have that $H^1(N, \Lambda) \cong \mathbb{Z}/2\mathbb{Z}$ and $H^2(N, \Lambda) \cong 1$. Similarly, for $H$, we obtain

\[\begin{tikzcd}
1 \arrow[r] & H^1(H, \Lambda) \arrow[r] & \mathbb{Z}/2\mathbb{Z} \arrow[r] & (\mathbb{Z}/2\mathbb{Z})^2 \arrow[r] & H^2(H, \Lambda) \arrow[r] & 1 \arrow[r] & 1.
\end{tikzcd}\]

Computing these cohomology groups in SageMath gives $H^1(H, \Lambda) \cong 1, H^2(H, \Lambda) \cong \mathbb{Z}/2\mathbb{Z}$. Furthermore, the program tell us that $\text{Ker}(H^2(G, \Lambda) \to H^2(H, \Lambda)) \cong 1$. Hence $\Sh^2(G, \Lambda) \cong 1$, so $\Sh^1(\mathbf{T})$ is trivial in this example if none of the decomposition groups are equal to $G$. (It is well known that this can only happen over ramified primes in $K$.)

Now, choosing $L_1 = \mathbb{Q}(\sqrt{-3}), L_2 = \mathbb{Q}(\sqrt{-3}, \sqrt{13})$ gives exactly the desired selection of Galois groups, so it follows that $\PHNP_{\mathbb{Q}}(L_1, L_2)$ holds in this case. However, it is shown in \cite{hasse} that $\mathbb{Q}(\sqrt{-3}, \sqrt{13})$ does not satisfy the Hasse norm principle, hence this example demonstrates that $\PHNP$ does not imply $\HNP$ in each of the constituent fields.

Similarly, we can take $L_1 = \mathbb{Q}(\sqrt{p})$ and $L = L_2 = \mathbb{Q}(\sqrt{p},\sqrt{q})$, where $p,q$ are prime numbers such that $\left(\frac{p}{q}\right) = 1$ and $p, q \equiv 1 \pmod 4$, which ensures that the decomposition groups of $\Gal(L/K)$ are always cyclic. Such a pair verifies $\PHNP_{\mathbb{Q}}(L_1, L_2)$ however it is well known that $\Sh^1(L_2/\mathbb{Q},\mathrm{R}_{L_2/\mathbb{Q}}^{(1)}\mathbb{G}_m)) \cong \mathbb{Z}/2\mathbb{Z}$ hence $\HNP_\mathbb{Q}(L_2)$, and by extension $\HNP_\mathbb{Q}(L_1)\land \HNP_\mathbb{Q}(L_2)$, does not hold. 
\end{example}

\section{A Condition for \texorpdfstring{$\HNP \Longrightarrow \PHNP$}{HNP->PHNP}}
\label{sec:condition}
In contrast to $\PHNP \Longrightarrow \HNP$, counterexamples to $\HNP \Longrightarrow \PHNP$ are far more sparse. In this section, we find sufficient conditions for identifying cases where $\HNP$ in all of the constituent fields implies $\PHNP$ based off work in \cite{thomaspaper} and \cite{liang2022tamagawa}.

\begin{prop}
\label{subsetmiddle}
    Suppose $\bigwedge_i \HNP_K(L_i)$ holds. Let $x$ be any element of $\Sh^2(\Lambda) \subseteq H^2(\Lambda)$, and let $\iota$ be the canonical map from $H^2(\Lambda)$ to $H^2(\Lambda^1)$ defined from equation \Ref{shortZ}. Then $x \in \mathrm{Ker}(\phi)$. 

\end{prop}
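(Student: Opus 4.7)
My plan is to show that $\iota$ (which I read the stated $\phi$ as being) annihilates every class in $\Sh^2(\Lambda)$ by establishing the stronger fact that $\Sh^2(L/K,\Lambda^1) = 0$, and then invoking naturality of the long exact sequence attached to (\ref{shortZ}).

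First, I would rewrite the character lattice as $\Lambda^1 = \mathbf{X}^\star\bigl(\prod_{i=1}^n \mathrm{R}^{(1)}_{L_i/K}\mathbb{G}_m\bigr) \cong \bigoplus_{i=1}^n \mathbf{X}^\star(\mathbf{T}^1_{L_i/K})$ as $G$-modules, using that $\mathbf{X}^\star$ carries products of tori to direct sums of lattices. Since both group cohomology and restriction to decomposition subgroups commute with finite direct sums of $G$-modules, this yields $\Sh^2(L/K,\Lambda^1) \cong \bigoplus_i \Sh^2\bigl(L/K,\mathbf{X}^\star(\mathbf{T}^1_{L_i/K})\bigr)$.

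Second, I would feed each summand into Tate--Nakayama duality to obtain $\Sh^2\bigl(L/K,\mathbf{X}^\star(\mathbf{T}^1_{L_i/K})\bigr) \cong \Sh^1(K,\mathbf{T}^1_{L_i/K})$, which by (\ref{eq:shanormpple}) is exactly the obstruction to $\HNP_K(L_i)$. The hypothesis $\bigwedge_i \HNP_K(L_i)$ then forces each summand, and hence $\Sh^2(L/K,\Lambda^1)$ itself, to vanish. Note that although $\mathbf{T}^1_{L_i/K}$ is already split by $L_i$, it is also split by the larger Galois closure $L$, so Tate--Nakayama applies uniformly with respect to the single Galois group $G = \mathrm{Gal}(L/K)$.

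Third, I would assemble the conclusion via naturality. The short exact sequence (\ref{shortZ}) yields a commutative diagram
\[\begin{tikzcd}
H^2(L/K,\Lambda) \arrow[r,"\iota"]\arrow[d] & H^2(L/K,\Lambda^1) \arrow[d]\\
\displaystyle\prod_{v,\,w\mid v} H^2(L_w/K_v,\Lambda) \arrow[r] & \displaystyle\prod_{v,\,w\mid v} H^2(L_w/K_v,\Lambda^1)
\end{tikzcd}\]
in which the vertical arrows are the restriction maps defining the Tate--Shafarevich groups. For any $x\in\Sh^2(\Lambda)$, the left vertical arrow kills $x$, so commutativity forces $\iota(x)$ to restrict trivially at every place; that is, $\iota(x)\in\Sh^2(L/K,\Lambda^1)=0$, giving $x\in\mathrm{Ker}(\iota)$ as desired. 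The main delicate point is Step 1, where one must keep track of the $G$-module structure on $\Lambda^1$ carefully so that the splitting as a direct sum is compatible with restriction to every decomposition subgroup; once that bookkeeping is done, Steps 2 and 3 are routine applications of Tate--Nakayama and naturality.
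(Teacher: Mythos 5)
Your proof is correct and takes essentially the same route as the paper: both arguments rest on the fact that the hypothesis $\bigwedge_i \HNP_K(L_i)$ forces $\Sh^2(\Lambda^1)=0$ (equivalently, injectivity of $H^2(\Lambda^1)\to\prod H^2(D,\Lambda^1)$), and then conclude by chasing the commutative diagram of restriction maps attached to the long exact sequence of \eqref{shortZ}. The only difference is that you spell out, via the splitting $\Lambda^1\cong\bigoplus_i \mathbf{X}^\star(\mathbf{T}^1_{L_i/K})$ and Tate--Nakayama over the common splitting field $L$, why the HNP hypothesis kills $\Sh^2(\Lambda^1)$ --- a step the paper asserts in a single line.
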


\begin{proof}
    We begin by taking the cohomology of the exact sequence in equation (\ref{shortZ}). We get

    \[\begin{tikzcd}
\cdots \arrow[r] & H^2(\mathbb{Z}) \arrow[r] & H^2(\Lambda) \arrow[r] & H^2(\Lambda^1) \arrow[r] & \cdots.
\end{tikzcd}\]
Consider the following induced diagram.

\[\begin{tikzcd}
& 0 \arrow[d] & 0 \arrow[d] & 0 \arrow[d] & \\
& \Sh^2(\mathbb{Z}) \arrow[d]  & \Sh^2(\Lambda) \arrow[d, "\iota"] & \Sh^2(\Lambda^1) \arrow[d] & \\
\cdots \arrow[r] & H^2(\mathbb{Z}) \arrow[d] \arrow[r, "e"] & H^2(\Lambda) \arrow[r, "\phi"] \arrow[d, "a"] & H^2(\Lambda^1) \arrow[r] \arrow[d, "d"] & \cdots \\
\cdots \arrow[r] & \prod H^2(D, \mathbb{Z}) \arrow[r] & \prod H^2(D, \Lambda) \arrow[r, "b"] & \prod H^2(D, \Lambda^1) \arrow[r] & \cdots
\end{tikzcd}\]

Now, since we suppose that $\bigwedge_i \HNP_K(L_i)$ holds, it follows that $\Sh^2(\Lambda^1) = 0$. Hence $d$ is injective. Since $\mathrm{Ker}\  a = \mathrm{Im}\  \iota$, we have $b(a(\iota(x))) = 0$, so $d(c(\iota(x))) = 0$. Since $\iota, d$ are both injective, it follows that $\iota(x) \in \mathrm{Ker}\  \phi$ as desired. \end{proof}

Notice furthermore that $\mathrm{Im}\  e = \mathrm{Ker}\  \phi$. Therefore if $\HNP$ holds, equivalently if $d$ is injective, we have that $\mathrm{Im}\  \iota \subseteq \mathrm{Im}\  e$. Note that if $e$ is trivial, it follows that $\Sh^2(\Lambda)$ is trivial, so $\PHNP$ must hold with the assumption that $\bigwedge_i \HNP_K(L_i)$ is true. Following the notations of \cite{liang2022tamagawa}, let $H^2(\mathbb{Z})' := \{x \in H^2(\mathbb{Z}) : e(x) \in \mathrm{Im}\  \iota\}$. We can now extract a set of sufficient conditions for when $\HNP$ implies $\PHNP$ using decomposition groups.

\begin{prop}\label{descriptor}
    Suppose $\bigwedge_i \HNP_K(L_i)$ holds. Let $\psi$ be the map from $\prod H^1(D, \Lambda^1)$ to $\prod H^2(D, \mathbb{Z})$ in the commutative diagram from Proposition \ref{subsetmiddle}. Then $|\Sh^2(\Lambda)| =  \frac{|H^2(\mathbb{Z})'|}{|\mathrm{Ker}\  e|}$. In particular, if $|\mathrm{Im}\ \psi| = |\mathrm{Ker}\  e|$ then $\PHNP$ holds.
\end{prop}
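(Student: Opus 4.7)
The plan is to split the argument into two parts matching the two assertions of the proposition. For the order formula $|\Sh^2(\Lambda)| = |H^2(\mathbb{Z})'|/|\mathrm{Ker}\ e|$, I would extract the short exact sequence
\[0 \to \mathrm{Ker}\ e \to H^2(\mathbb{Z})' \xrightarrow{e} \Sh^2(\Lambda) \to 0\]
directly from the diagram of Proposition \ref{subsetmiddle}. That proposition already supplies the key containment $\Sh^2(\Lambda) \subseteq \mathrm{Im}\ e$, which shows that $e$ restricted to the preimage $H^2(\mathbb{Z})' = e^{-1}(\Sh^2(\Lambda))$ is surjective onto $\Sh^2(\Lambda)$; since $0 \in \Sh^2(\Lambda)$ we also have $\mathrm{Ker}\ e \subseteq H^2(\mathbb{Z})'$, so the displayed sequence is exact and the formula follows by counting.

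For the sufficient condition, the first step is to translate the defining condition of $H^2(\mathbb{Z})'$ into decomposition-group data. Applying exactness of the long sequence $H^1(D,\Lambda^1) \xrightarrow{\psi_D} H^2(D,\mathbb{Z}) \xrightarrow{e_D} H^2(D,\Lambda)$ at each decomposition group $D$ and chasing the commutative diagram of Proposition \ref{subsetmiddle}, one checks that $x \in H^2(\mathbb{Z})'$ if and only if $a(e(x)) = 0$ if and only if $c(x) \in \mathrm{Ker}(\prod e_D) = \mathrm{Im}\ \psi$. Thus $H^2(\mathbb{Z})' = c^{-1}(\mathrm{Im}\ \psi)$, and so the restriction $c|_{H^2(\mathbb{Z})'}$ lands in $\mathrm{Im}\ \psi$ with kernel $\mathrm{Ker}\ c = \Sh^2(G,\mathbb{Z})$.

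The main obstacle is controlling this kernel, and the crux is the standard fact that $\Sh^2(G,\mathbb{Z}) = 0$. I would derive this via the identification $H^2(G,\mathbb{Z}) \cong \mathrm{Hom}(G^{\mathrm{ab}}, \mathbb{Q}/\mathbb{Z})$ (using the exact sequence $0 \to \mathbb{Z} \to \mathbb{Q} \to \mathbb{Q}/\mathbb{Z} \to 0$ and the vanishing of $H^i(G,\mathbb{Q})$ for $i \geq 1$) together with Chebotarev's density theorem: every $g \in G$ generates a cyclic subgroup that appears as the decomposition group at some unramified place, so a character trivial on all decomposition groups vanishes on every $g \in G$. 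Once $\Sh^2(G,\mathbb{Z}) = 0$, the map $c: H^2(\mathbb{Z})' \hookrightarrow \mathrm{Im}\ \psi$ is injective, whence $|H^2(\mathbb{Z})'| \leq |\mathrm{Im}\ \psi|$. Combining with the hypothesis $|\mathrm{Im}\ \psi| = |\mathrm{Ker}\ e|$ and the lower bound $|H^2(\mathbb{Z})'| \geq |\mathrm{Ker}\ e|$ (from the inclusion $\mathrm{Ker}\ e \subseteq H^2(\mathbb{Z})'$), the order formula of the first part forces $|\Sh^2(\Lambda)| = 1$, and Tate--Nakayama then yields $\PHNP$.
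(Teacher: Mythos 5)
Your proposal is correct and follows essentially the same route as the paper: the same counting of $e$ restricted to $H^2(\mathbb{Z})' = e^{-1}(\Sh^2(\Lambda))$ using the containment from Proposition \ref{subsetmiddle}, the same Chebotarev argument that $\Sh^2(G,\mathbb{Z})=0$ makes the restriction map to decomposition groups injective, and the same diagram chase placing $H^2(\mathbb{Z})'$ inside $\mathrm{Im}\ \psi$ to conclude $|\Sh^2(\Lambda)| \leq |\mathrm{Im}\ \psi|/|\mathrm{Ker}\ e| = 1$. The only cosmetic difference is packaging the order formula as a short exact sequence rather than the paper's direct image/kernel count.
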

\begin{proof}
    First, we show that $\Sh^2(\mathbb{Z}) = 0$. Suppose otherwise, then there exists a nonzero map $f \in H^2(G, \mathbb{Z}) \cong G^{\lor}$ in $\Sh^2(\mathbb{Z})$. In particular, there exists some $g \in G$ such that $f(g) \neq 0$. Then $\langle g \rangle \subset G$ is a cyclic subgroup of $G$, so by the Chebotarev density theorem (see \cite{milne1997class}, p.164), it follows that $\langle g \rangle$ is a decomposition group. However $f|_{\langle g \rangle} \neq 0$, giving contradiction.

    Now, we can redraw the commutative diagram from Proposition \ref{subsetmiddle}, shifting the sequence one term to the left and labeling functions $e, f, g, a, \xi$ as shown.

    \[\begin{tikzcd}
& 0 \arrow[d] & & 0 \arrow[d] & \\
& \Sh^1(\Lambda^1) \arrow[d]  & 0 \arrow[d] & \Sh^2(\Lambda) \arrow[d, "\iota"] & \\
\cdots \arrow[r] & H^1(\Lambda^1) \arrow[d] \arrow[r, "\xi"] & H^2(\mathbb{Z}) \arrow[r, "e"] \arrow[d, "f"] & H^2(\Lambda) \arrow[r] \arrow[d, "a"] & \cdots \\
\cdots \arrow[r] & \prod H^1(D, \Lambda^1) \arrow[r, "\psi"] & \prod H^2(D, \mathbb{Z}) \arrow[r, "g"] & \prod H^2(D, \Lambda) \arrow[r] & \cdots
\end{tikzcd}\]

In particular, since $\iota$ is injective, the image of $H^2(\mathbb{Z})'$ under $e$ is exactly $\mathrm{Im}\ \iota$. Thus $|\Sh^2(\Lambda)| = |\mathrm{Im}\  \iota| = \frac{|H^2(\mathbb{Z})'|}{|\ker e|}$. 

Furthermore, for any element $x$ of $H^2(\mathbb{Z})'$, $e(x) \in \mathrm{Im}\ \iota$ so $a(e(x)) = 0$. Thus $f(x) \in \mathrm{Ker}\ g$ so $f(x) \in \mathrm{Im}\ \psi$. Thus since $f$ is injective, it follows that $|H^2(\mathbb{Z})'| \leq |\mathrm{Im}\ \psi$| so if $|\mathrm{Im}\  \psi| = |\mathrm{Ker}\  e|$ then $\displaystyle 1 = \frac{|\mathrm{Im}\  \psi|}{|\mathrm{Ker}\  e|} \geq \frac{|H^2(\mathbb{Z})'|}{|\mathrm{Ker}\  e|} = |\Sh^2(\Lambda)|$ as desired. \end{proof}

\begin{cor}
    If $\mathrm{Ker}\ e = H^2(\mathbb{Z}) \cong G^{\lor}$, then $\bigwedge_i \HNP_K(L_i) \Longrightarrow \PHNP_K(L_1, L_2, \ldots, L_n)$.
\end{cor}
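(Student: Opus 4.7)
The plan is to observe that the hypothesis $\mathrm{Ker}\ e = H^2(\mathbb{Z})$ is equivalent to the connecting map $e : H^2(\mathbb{Z}) \to H^2(\Lambda)$ coming from the short exact sequence (\ref{shortZ}) being identically zero. From the associated long exact sequence in cohomology, exactness at $H^2(\Lambda)$ gives $\mathrm{Im}\ e = \mathrm{Ker}\ \phi$, where $\phi : H^2(\Lambda) \to H^2(\Lambda^1)$ is induced by the surjection $\Lambda \twoheadrightarrow \Lambda^1$. Under our hypothesis this forces $\mathrm{Ker}\ \phi = 0$, i.e.\ $\phi$ is injective.

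Combining this injectivity with Proposition \ref{subsetmiddle}, which asserts that under the assumption $\bigwedge_i \HNP_K(L_i)$ every element of $\Sh^2(\Lambda)\subseteq H^2(\Lambda)$ lies in $\mathrm{Ker}\ \phi$, we conclude $\Sh^2(\Lambda) = 0$. Tate--Nakayama duality then yields $\Sh^1(L/K,\mathbf{T}) \cong \Sh^2(L/K,\Lambda) = 0$, which is equivalent to $\PHNP_K(L_1,\ldots,L_n)$ by the proposition in Section \ref{sec:methodology}. Equivalently, one may read this off directly from Proposition \ref{descriptor}: when $e \equiv 0$ the subgroup $H^2(\mathbb{Z})'$ coincides with $H^2(\mathbb{Z}) = \mathrm{Ker}\ e$, so the formula $|\Sh^2(\Lambda)| = |H^2(\mathbb{Z})'|/|\mathrm{Ker}\ e|$ collapses to $1$. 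I anticipate no serious obstacle, since the corollary is essentially the formal record of the remark made immediately before Proposition \ref{descriptor}; the only subtlety is to track the two uses of the notation $\iota$ in Proposition \ref{subsetmiddle} (inclusion $\Sh^2(\Lambda)\hookrightarrow H^2(\Lambda)$ versus map in the LES) and make sure the injectivity statement is applied to the correct arrow $\phi$.
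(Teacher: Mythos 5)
Your proposal is correct, and its closing aside (apply Proposition \ref{descriptor} with $H^2(\mathbb{Z})' \subseteq H^2(\mathbb{Z}) = \mathrm{Ker}\ e$, so $|\Sh^2(\Lambda)| = |H^2(\mathbb{Z})'|/|\mathrm{Ker}\ e| \leq 1$) is precisely the paper's one-line proof. Your primary route is slightly different in mechanism: instead of the counting formula you observe that $\mathrm{Ker}\ e = H^2(\mathbb{Z})$ means $e\equiv 0$, so exactness of $H^2(\mathbb{Z}) \xrightarrow{e} H^2(\Lambda) \xrightarrow{\phi} H^2(\Lambda^1)$ forces $\mathrm{Ker}\ \phi = \mathrm{Im}\ e = 0$, and then Proposition \ref{subsetmiddle} (which places $\Sh^2(\Lambda)$ inside $\mathrm{Ker}\ \phi$ under the hypothesis $\bigwedge_i \HNP_K(L_i)$) kills $\Sh^2(\Lambda)$ directly; Tate--Nakayama and the criterion from \S\ref{sec:methodology} then give $\PHNP$. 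This variant is equally valid and arguably cleaner, since it bypasses the order computation of Proposition \ref{descriptor} and makes visible exactly where the $\HNP$ hypothesis enters (injectivity of the restriction on $H^2(\Lambda^1)$); the paper's route buys brevity by quoting the already-established formula. Two cosmetic points: the map $e$ is induced by the inclusion $\mathbb{Z}\hookrightarrow\Lambda$ rather than being a connecting homomorphism of (\ref{shortZ}), and you are right that the statement of Proposition \ref{subsetmiddle} conflates the inclusion $\Sh^2(\Lambda)\hookrightarrow H^2(\Lambda)$ with the arrow to $H^2(\Lambda^1)$ --- your reading (injectivity applied to $\phi$) is the intended one.
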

\begin{proof}
    It follows from Proposition \ref{descriptor} that $1 \geq \frac{|H^2(\mathbb{Z})'|}{H^2(\mathbb{Z})} = |\Sh^2(\Lambda)|$, so $\Sh^2(\Lambda) = 0$ as desired.
\end{proof}

Now that we have a path to determining when $\PHNP$ holds in a given collection of field extensions, we hope to understand the map $e$ so that we can explicitly compute its kernel and $H^2(\mathbb Z)'$. We proceed by analyzing the map $\xi$.

\begin{prop}
    Let $\Lambda^1_i = \mathrm{Ind}^G_{G^{(i)}}\mathbb{Z}/\langle d_i \rangle$ for any $i$. It follows that $H^1(G, \Lambda_i^1) \cong \{f: G \to \mathbb{Q}/\mathbb{Z} : f|_{G^{(i)}} \equiv 0 \}$.
    \label{prexi}
\end{prop}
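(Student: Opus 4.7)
The plan is to read off $H^1(G, \Lambda_i^1)$ from the defining presentation of $\Lambda_i^1$ together with Shapiro's lemma and the standard identification of $H^2(-,\mathbb{Z})$ for finite groups. Concretely, $\Lambda_i^1 = \mathbb{Z}[S_i]/\langle d_i\rangle$ fits into the $G$-equivariant short exact sequence
$$0 \longrightarrow \mathbb{Z} \xrightarrow{\;\cdot d_i\;} \mathbb{Z}[S_i] \longrightarrow \Lambda_i^1 \longrightarrow 0,$$
where the first map sends $1 \mapsto d_i = \sum_{s \in S_i} s$. Taking the long exact sequence in cohomology and applying Shapiro's lemma to $\mathbb{Z}[S_i] = \mathrm{Ind}^G_{G^{(i)}}\mathbb{Z}$ gives $H^k(G, \mathbb{Z}[S_i]) \cong H^k(G^{(i)}, \mathbb{Z})$; the point to verify (or cite) is that under this identification the connecting map $H^k(G,\mathbb{Z}) \to H^k(G,\mathbb{Z}[S_i])$ induced by $1 \mapsto d_i$ becomes the restriction map $\mathrm{res}^G_{G^{(i)}}$.

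Next I would plug in low-degree values. Because $G^{(i)}$ is finite, $H^1(G^{(i)}, \mathbb{Z}) = \mathrm{Hom}(G^{(i)},\mathbb{Z}) = 0$, so the long exact sequence collapses to
$$0 \longrightarrow H^1(G, \Lambda_i^1) \longrightarrow H^2(G, \mathbb{Z}) \xrightarrow{\;\mathrm{res}\;} H^2(G^{(i)}, \mathbb{Z}),$$
identifying $H^1(G, \Lambda_i^1)$ with the kernel of restriction on $H^2(-,\mathbb{Z})$.

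Finally, I would invoke the Bockstein from $0 \to \mathbb{Z} \to \mathbb{Q} \to \mathbb{Q}/\mathbb{Z} \to 0$. Since $H^j(H,\mathbb{Q}) = 0$ in positive degrees for any finite group $H$, the boundary map gives canonical isomorphisms $H^2(G,\mathbb{Z}) \cong \mathrm{Hom}(G,\mathbb{Q}/\mathbb{Z}) = G^\vee$ and $H^2(G^{(i)},\mathbb{Z}) \cong (G^{(i)})^\vee$, and these are natural enough to transport the restriction map to the honest restriction of characters $f \mapsto f|_{G^{(i)}}$. Therefore
$$H^1(G,\Lambda_i^1) \cong \ker\!\left(G^\vee \xrightarrow{\mathrm{res}} (G^{(i)})^\vee\right) = \{f: G \to \mathbb{Q}/\mathbb{Z} \, : \, f|_{G^{(i)}} \equiv 0\},$$
as claimed.

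The argument is essentially a string of standard cohomological manipulations, so there is no serious obstacle; the only genuinely substantive point is checking that the inclusion $1 \mapsto d_i$ indeed realizes the restriction map under Shapiro's isomorphism, which is a classical compatibility (it is the statement dual to the fact that corestriction is induced by the $G$-equivariant augmentation $\mathbb{Z}[S_i] \to \mathbb{Z}$).
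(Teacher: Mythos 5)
Your proposal is correct and follows essentially the same route as the paper: the short exact sequence $0 \to \mathbb{Z} \xrightarrow{\,\cdot d_i\,} \mathrm{Ind}^G_{G^{(i)}}\mathbb{Z} \to \Lambda^1_i \to 0$, Shapiro's lemma, the vanishing of $H^1$ with $\mathbb{Z}$-coefficients for a finite group, and the identification $H^2(-,\mathbb{Z}) \cong (-)^{\lor}$ yielding the kernel of the restriction map $G^{\lor} \to (G^{(i)})^{\lor}$. Your extra care in checking that $1 \mapsto d_i$ realizes restriction under Shapiro's isomorphism is a compatibility the paper takes for granted, so there is nothing further to add.
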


\begin{proof}

Recall from Proposition \ref{fullchar} that 
\begin{align*}
    \Lambda^1 &= \bigoplus \limits_{i = 1}^n \mathrm{Ind}^G_{G^{(i)}}\mathbb{Z}/\langle d_i \rangle \\
    &\cong \bigoplus \limits_{i = 1}^n \mathbb{Z}[G/G^{(i)}]/\left\langle \sum_{g \in G/G^{(i)}} g\right\rangle.
\end{align*}

Thus we have $H^i(G, \Lambda^1) = \bigoplus_i H^i(G, \Lambda^1_i)$.

Letting $\mathrm{diag}_i$ denote the map from $k \in \mathbb{Z}$ to $kd_i$, we obtain the following short exact sequence:

\[\begin{tikzcd}
0 \arrow[r] & \mathbb{Z} \arrow[r, "\mathrm{diag}_i"] & \mathrm{Ind}^G_{G^{(i)}}\mathbb{Z} \arrow[r] & \Lambda^1_i \arrow[r] & 0.
\end{tikzcd}\]

Computing the cohomology, we obtain:

\[\begin{tikzcd}[cramped, sep=small]
\cdots \arrow[r] & H^1(G, \mathrm{Ind}^G_{G^{(i)}}\mathbb{Z})  \arrow[r] & H^1(G, \Lambda^1_i) \arrow[r] & H^2(G, \mathbb{Z}) \arrow[r] & H^2(G, \mathrm{Ind}^G_{G^{(i)}}\mathbb{Z}) \arrow[r] & \cdots.
\end{tikzcd}\]

Note that $H^1(G, \mathrm{Ind}^G_{G^{(i)}}\mathbb{Z}) \cong H^1(G^{(i)}, \mathbb{Z}) = 0$ by Shapiro's Lemma and Hilbert 90 (see \cite{milne1997class}, p.67), $H^2(G, \mathbb{Z}) = G^{\lor}$, and $H^2(G, \mathrm{Ind}^G_{G^{(i)}}\mathbb{Z}) \cong H^2(G^{(i)}, \mathbb{Z}) = (G^{(i)})^{\lor}$. Rewriting the cohomology, we have

\[\begin{tikzcd}
0 \arrow[r] & H^1(G, \Lambda^1_i) \arrow[r] & G^{\lor} \arrow[r, "r_i"] & (G^{(i)})^{\lor} \arrow[r] & \cdots.
\end{tikzcd}\]

It follows that $H^1(G, \Lambda^1_i) \cong \mathrm{Ker}\ r_i = \{f: G \to \mathbb{Q}/\mathbb{Z} : f|_{G^{(i)}} \equiv 0 \}$ as desired.\end{proof}

In particular, analogously to \cite{liang2022tamagawa}, it follows the map $\xi$ is the sum map, obtained by taking the $f_i$ such that $H^1(\Lambda^1) = \bigoplus_i H^1(\Lambda^1_i) = (f_1, f_2, \ldots, f_n)$ and summing them to obtain $f = f_1+f_2+\cdots+f_n \in G^{\lor} = H^2(G, \mathbb{Z})$. Hence we obtain the following proposition.

\begin{prop}
    \label{xistructure}
    We have 
    $$\mathrm{Ker}\ e = \mathrm{Im}\ \xi = \left\lbrace f \in G^{\lor} : \forall i,  \exists f_i \in G^{\lor}, f_i|_{G^{(i)}} \equiv 0, f = \sum_i f_i\right\rbrace$$ where $f_1, f_2, \ldots, f_n \in G^{\lor} $.
\end{prop}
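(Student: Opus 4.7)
The plan is to read off the equality $\mathrm{Ker}\,e = \mathrm{Im}\,\xi$ from exactness and then identify $\xi$ explicitly using the description of $H^1(\Lambda^1)$ from Proposition \ref{prexi}. The first part is free: exactness of the long exact sequence obtained from (\ref{shortZ}) gives $\mathrm{Ker}\,e = \mathrm{Im}\,\xi$ with no work. So the whole content of the proposition lies in identifying $\mathrm{Im}\,\xi$ inside $H^2(G,\mathbb{Z}) \cong G^{\vee}$ with the set of sums $f_1 + \cdots + f_n$ where each $f_i \in G^{\vee}$ vanishes on $G^{(i)}$.

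From Proposition \ref{fullchar}, $\Lambda^1 = \bigoplus_i \Lambda^1_i$ with $\Lambda^1_i = \mathrm{Ind}^G_{G^{(i)}}\mathbb{Z}/\langle d_i \rangle$, so $H^1(G,\Lambda^1) = \bigoplus_i H^1(G,\Lambda^1_i)$, and under the identification of Proposition \ref{prexi} this is exactly $\bigoplus_i \{f_i \in G^{\vee} : f_i|_{G^{(i)}} \equiv 0\}$. The connecting map $\xi$ is then compatible with this direct sum decomposition, so it suffices to understand its restriction to each summand and then sum.

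The main technical step I would set up is to realize $\xi$ as coming from a morphism of short exact sequences. Namely, I would build the diagram
\[\begin{tikzcd}
0 \arrow[r] & \mathbb{Z}^n \arrow[r] \arrow[d, "\Sigma"] & \prod_i \mathrm{Ind}^G_{G^{(i)}}\mathbb{Z} \arrow[r] \arrow[d] & \bigoplus_i \Lambda^1_i \arrow[r] \arrow[d, "="] & 0 \\
0 \arrow[r] & \mathbb{Z} \arrow[r] & \Lambda \arrow[r] & \Lambda^1 \arrow[r] & 0
\end{tikzcd}\]
where the top row is the direct sum over $i$ of the short exact sequences used in the proof of Proposition \ref{prexi}, the middle vertical arrow is the quotient map $\prod_i \mathbb{Z}[G_i] \twoheadrightarrow \Lambda$ from Proposition \ref{fullchar} (restricted), and $\Sigma$ is the sum $(a_1,\ldots,a_n) \mapsto \sum_i a_i$. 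The key commutativity check is the left square: the standard basis vector $e_i \in \mathbb{Z}^n$ maps first to $d_i$ in the $i$-th coordinate and then to the class of $(0,\ldots,d_i,\ldots,0)$ in $\Lambda$, which by the defining relation $\{(\lambda_i d_i): \sum \lambda_i = 0\}$ equals the class of $(d_1,0,\ldots,0)$, i.e., the image of $1$ under $\mathbb{Z} \hookrightarrow \Lambda$. So the square commutes, and naturality of the connecting homomorphism yields $\xi = \Sigma_* \circ \bigoplus_i \delta_i$, where $\delta_i : H^1(G,\Lambda^1_i) \to H^2(G,\mathbb{Z})$ is the connecting map from Proposition \ref{prexi}. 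Since that proof shows $\delta_i$ is exactly the inclusion $\{f_i : f_i|_{G^{(i)}}\equiv 0\} \hookrightarrow G^{\vee}$, and $\Sigma_*$ on $H^2$ is visibly the sum $(x_1,\ldots,x_n)\mapsto \sum x_i$, one concludes $\xi(f_1,\ldots,f_n) = \sum_i f_i$, proving the stated description of $\mathrm{Im}\,\xi$.

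The one place I expect to have to be careful is the commutativity of that left square: it requires keeping track of which generator of $\mathbb{Z}\subset \Lambda$ corresponds to the torus map $\mathbf{T}\to\mathbb{G}_m$, which is the common value of the $N_{L_i/K}$, so dually sends $1$ to \emph{any} of the equivalent classes $(0,\ldots,d_i,\ldots,0)$. Once this identification is pinned down, the rest of the argument is a formal diagram chase using naturality of $\delta$ and Shapiro's lemma already invoked in Proposition \ref{prexi}.
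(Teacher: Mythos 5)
Your proof is correct and follows essentially the same route as the paper: identify each $H^1(G,\Lambda^1_i)$ with $\{f_i\in G^{\lor} : f_i|_{G^{(i)}}\equiv 0\}$ via Proposition \ref{prexi}, identify $\xi$ with the sum map, and read off $\mathrm{Ker}\ e=\mathrm{Im}\ \xi$ from exactness of the long exact sequence attached to (\ref{shortZ}). The only difference is that the paper simply asserts that $\xi$ is the sum map (by analogy with the cited literature), whereas your morphism of short exact sequences with the sum map $\Sigma$ on the left, together with naturality of the connecting homomorphism and the correct observation that the generator of $\mathbb{Z}\subset\Lambda$ is the common class of $(0,\ldots,d_i,\ldots,0)$ for every $i$, actually proves it.
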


\begin{remark}
    Both of Propositions \ref{prexi} and \ref{xistructure} can be adapted for a different choice of Galois group $D \subseteq G$. It suffices to replace $G^{(i)}$ with $D^{(i)} = G^{(i)} \cap D$ and redefine $\Lambda_i^1 = \mathrm{Ind}^D_{D^{(i)}}$ throughout the proof. These changes yield the following proposition.
\end{remark}

\begin{prop}
    \label{psistructure}
    Let $e_D$ denote the map $H^2(D, \mathbb{Z}) \to H^2(D, \Lambda)$ given in the commutative diagram. We have $$\mathrm{Ker}\ e_D = \left\lbrace f \in D^{\lor} : \forall i,  \exists f_i \in D^{\lor}, f_i|_{D^{(i)}} \equiv 0, f = \sum_i f_i\right\rbrace$$ where $f_1, f_2, \ldots, f_n \in D^{\lor}$.
\end{prop}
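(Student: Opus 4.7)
The plan is to mirror the proofs of Propositions \ref{prexi} and \ref{xistructure} verbatim, with the substitutions $G \rightsquigarrow D$, $G^{(i)} \rightsquigarrow D^{(i)} = G^{(i)} \cap D$, and $\Lambda^1_i \rightsquigarrow \mathrm{Ind}^D_{D^{(i)}} \mathbb{Z}/\langle d_i\rangle$, exactly as the preceding remark advertises. Restricting the short exact sequence \eqref{shortZ} to $D$ and taking long exact cohomology gives
\[\cdots \to H^1(D, \Lambda^1) \xrightarrow{\xi_D} H^2(D, \mathbb{Z}) \xrightarrow{e_D} H^2(D, \Lambda) \to \cdots,\]
so $\mathrm{Ker}(e_D) = \mathrm{Im}(\xi_D)$, and it suffices to identify the image of $\xi_D$.

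First, I would decompose $H^1(D, \Lambda^1) = \bigoplus_i H^1(D, \Lambda^1_i)$ and compute each summand by reusing the argument of Proposition \ref{prexi} with $G$ replaced by $D$. From the short exact sequence $0 \to \mathbb{Z} \xrightarrow{\mathrm{diag}_i} \mathrm{Ind}^D_{D^{(i)}} \mathbb{Z} \to \Lambda^1_i \to 0$, Shapiro's lemma together with Hilbert 90 yields $H^1(D, \mathrm{Ind}^D_{D^{(i)}} \mathbb{Z}) \cong H^1(D^{(i)}, \mathbb{Z}) = 0$ and $H^2(D, \mathrm{Ind}^D_{D^{(i)}} \mathbb{Z}) \cong (D^{(i)})^{\lor}$. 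The resulting long exact sequence identifies $H^1(D, \Lambda^1_i)$ with the kernel of the restriction $r_i^D : D^{\lor} \to (D^{(i)})^{\lor}$, namely with $\{f \in D^{\lor} : f|_{D^{(i)}} \equiv 0\}$.

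Next, I would identify $\xi_D$ explicitly as the sum map. An element of $H^1(D, \Lambda^1)$ corresponds to a tuple $(f_1, \ldots, f_n)$ of characters with $f_i|_{D^{(i)}} \equiv 0$. The inclusion $\mathbb{Z} \hookrightarrow \Lambda$ in \eqref{shortZ} sits diagonally across the summands of $\bigoplus_i \mathbb{Z}[G/G^{(i)}]/\langle d_i \rangle$, as dictated by the constraint $\sum_i \lambda_i = 0$ in Proposition \ref{fullchar}. Chasing the connecting homomorphism through the commutative diagram relating \eqref{shortZ} (restricted to $D$) to the $n$ extensions $0 \to \mathbb{Z} \to \mathrm{Ind}^D_{D^{(i)}} \mathbb{Z} \to \Lambda^1_i \to 0$ shows that $\xi_D(f_1, \ldots, f_n) = f_1 + \cdots + f_n$ in $H^2(D, \mathbb{Z}) \cong D^{\lor}$. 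Taking the image then yields the claimed description of $\mathrm{Ker}(e_D)$.

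The main obstacle is the identification of $\xi_D$ as the sum map: this is where the diagonal embedding of $\mathbb{Z}$ into $\Lambda$ enters crucially, and mirrors the analogous step in Proposition \ref{xistructure} (which itself follows \cite{liang2022tamagawa}). Everything else is a mechanical substitution of $D$ for $G$ in the earlier arguments; in particular, no Mackey decomposition is needed provided one adopts the remark's convention of defining $\Lambda^1_i = \mathrm{Ind}^D_{D^{(i)}} \mathbb{Z}/\langle d_i \rangle$ directly as a $D$-module.
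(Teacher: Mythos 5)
Your proposal matches the paper's own treatment, which simply states that Propositions \ref{prexi} and \ref{xistructure} carry over upon replacing $G$ by $D$, $G^{(i)}$ by $D^{(i)} = G^{(i)} \cap D$, and $\Lambda^1_i$ by $\mathrm{Ind}^D_{D^{(i)}}\mathbb{Z}/\langle d_i\rangle$; your restriction of \eqref{shortZ} to $D$, the Shapiro/Hilbert~90 computation, and the identification of $\xi_D$ as the sum map are exactly the intended argument. You even flag, and resolve in the same way the paper does, the one delicate point: that $\Lambda^1_i$ restricted to $D$ is taken by convention to be $\mathrm{Ind}^D_{D^{(i)}}\mathbb{Z}/\langle d_i\rangle$ rather than analyzed via a Mackey decomposition.
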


Using the above result, we can also deduce an explicit formulation for $H^2(\mathbb{Z})'$ analogous to that of Proposition \ref{xistructure} for $\mathrm{Ker}\ e$. 

\begin{prop}
    \label{numerator}
    The set of $H^2(\mathbb{Z})' \subseteq H^2(\mathbb{Z}) \cong G^{\lor}$ is exactly $\{f: G \to \mathbb{Q}/\mathbb{Z} : \forall D, f|_D \in \mathrm{Ker}\  e_D\}$ where $D$ is chosen over all decomposition groups of $G$.
\end{prop}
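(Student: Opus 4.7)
The plan is to unfold the definition of $H^2(\mathbb{Z})'$ and translate it into a pointwise condition on decomposition groups via a diagram chase in the diagram appearing in Propositions \ref{subsetmiddle} and \ref{descriptor}. By definition, $H^2(\mathbb{Z})' = \{x \in H^2(\mathbb{Z}) : e(x) \in \mathrm{Im}\ \iota\}$, where $\iota \colon \Sh^2(\Lambda) \hookrightarrow H^2(\Lambda)$ is the natural inclusion. Since $\mathrm{Im}\ \iota = \Sh^2(\Lambda) = \mathrm{Ker}\ a$, with $a \colon H^2(\Lambda) \to \prod_D H^2(D, \Lambda)$ the product of restriction maps to the decomposition groups $D$, the condition $x \in H^2(\mathbb{Z})'$ is equivalent to $a(e(x)) = 0$.

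Next, I would exploit the naturality of the long exact sequence attached to (\ref{shortZ}). Applied to $G$ and to each decomposition group $D \subseteq G$, naturality under restriction produces commuting squares
\[
\begin{tikzcd}
H^2(G, \mathbb{Z}) \arrow[r, "e"] \arrow[d, "\mathrm{res}_D"'] & H^2(G, \Lambda) \arrow[d, "\mathrm{res}_D"] \\
H^2(D, \mathbb{Z}) \arrow[r, "e_D"'] & H^2(D, \Lambda)
\end{tikzcd}
\]
for every $D$. Taking the product over all decomposition groups yields $a \circ e = (\prod_D e_D) \circ f$, where $f \colon H^2(G, \mathbb{Z}) \to \prod_D H^2(D, \mathbb{Z})$ is the product of restrictions. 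Therefore $a(e(x)) = 0$ if and only if $e_D(x|_D) = 0$ for every $D$, which is exactly the condition $x|_D \in \mathrm{Ker}\ e_D$ for all decomposition groups $D$.

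Stringing these two equivalences together gives the claimed description of $H^2(\mathbb{Z})'$, and one may further expand $\mathrm{Ker}\ e_D$ via Proposition \ref{psistructure} for an explicit character-theoretic form. The argument is a pure diagram chase, so there is no real obstacle; the only point requiring care is confirming that the decomposition-group squares really commute, which is a standard consequence of the functoriality of the connecting homomorphism attached to (\ref{shortZ}) under restriction to a subgroup.
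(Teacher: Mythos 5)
Your proposal is correct and follows essentially the same route as the paper: unfold the definition $H^2(\mathbb{Z})' = \{x : e(x) \in \mathrm{Im}\,\iota\}$, identify $\mathrm{Im}\,\iota = \Sh^2(\Lambda) = \mathrm{Ker}\,a$, and use commutativity of the restriction squares (i.e.\ $a\circ e = (\prod_D e_D)\circ f$) to translate $a(e(x))=0$ into $x|_D \in \mathrm{Ker}\,e_D$ for every decomposition group $D$. The only cosmetic difference is that you invoke naturality of the long exact sequence directly instead of citing the displayed diagram from Proposition \ref{descriptor}, and you correctly observe that injectivity of $f$ (which the paper mentions) is not actually needed for this equivalence.
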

\begin{proof}
Recall the following commutative diagram from the proof of Proposition \ref{descriptor}.

     \[\begin{tikzcd}
& 0 \arrow[d] & & 0 \arrow[d] & \\
& \Sh^1(\Lambda^1) \arrow[d]  & 0 \arrow[d] & \Sh^2(\Lambda) \arrow[d, "\iota"] & \\
\cdots \arrow[r] & H^1(\Lambda^1) \arrow[d] \arrow[r, "\xi"] & H^2(\mathbb{Z}) \arrow[r, "e"] \arrow[d, "f"] & H^2(\Lambda) \arrow[r] \arrow[d, "a"] & \cdots \\
\cdots \arrow[r] & \prod H^1(D, \Lambda^1) \arrow[r, "\psi"] & \prod H^2(D, \mathbb{Z}) \arrow[r, "g"] & \prod H^2(D, \Lambda) \arrow[r] & \cdots
\end{tikzcd}\]

By definition, $H^2(\mathbb{Z})'$ is exactly the set of elements $x$ of $H^2(\mathbb{Z})$ such that $a(e(x)) = 0$. Since this diagram is commutative, it follows that $g(f(x)) = 0$. However $f$ is injective and given by the restriction map to each decomposition group $D$. Thus $H^2(\mathbb{Z})'$ is exactly the set of maps $f \in G^{\lor}$ such that $f|_D \in \mathrm{Ker}\ e_D$ for every decomposition group $D$, since $g$ can be decomposed as a direct product of $e_D$ for each decomposition group $D$ of $G$.
\end{proof}

These tools gives us an explicit way to prove that $\PHNP$ holds in a given choice of $L_1, L_2, \ldots, L_n$ by combining our formulation in Proposition \ref{xistructure} and the result in Proposition \ref{descriptor}. We apply these tools to the simple case when $L_1, L_2, \ldots, L_n$ are all Galois and independent:

\begin{prop}
    \label{independent}
    If $L_1, L_2, \ldots, L_n$ are all Galois over $\mathbb{Q}$ and $G \cong \prod_i G_i$, then \[\bigwedge_i \HNP_K(L_i) \Longrightarrow \PHNP_K(L_1, L_2, \ldots, L_n).\] Equivalently, $G^{\lor} \cong \prod_i G_i^{\lor}$.
\end{prop}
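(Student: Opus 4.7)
The plan is to invoke the corollary immediately following Proposition \ref{descriptor}, which guarantees $\bigwedge_i \HNP_K(L_i) \Longrightarrow \PHNP_K(L_1, L_2, \ldots, L_n)$ as soon as $\mathrm{Ker}\ e = H^2(G,\mathbb{Z}) \cong G^{\lor}$. Combined with Proposition \ref{xistructure}, this reduces the entire argument to the purely group-theoretic assertion that every character $f \in G^{\lor}$ can be written as a sum $f = \sum_i f_i$ with $f_i \in G^{\lor}$ and $f_i|_{G^{(i)}} \equiv 0$.

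First I would unpack the hypothesis $G \cong \prod_i G_i$. As noted in Section \ref{sec:notations}, this identifies $G^{(i)} = \Gal(L/L_i)$ with $\prod_{j\neq i} G_j$, so the projection $\pi_i : G \to G_i$ has kernel exactly $G^{(i)}$. Pontryagin duality, applied to the direct product, then supplies the canonical isomorphism $G^{\lor} \cong \prod_i G_i^{\lor}$ promised by the statement; under this isomorphism any $f \in G^{\lor}$ corresponds uniquely to a tuple $(f_1, \ldots, f_n)$ with $f_i \in G_i^{\lor}$.

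Finally I would lift each $f_i$ to a character $\tilde f_i := f_i \circ \pi_i \in G^{\lor}$. By construction $\tilde f_i|_{G^{(i)}} \equiv 0$, and evaluating on a general element of $\prod_i G_i$ gives $f = \sum_i \tilde f_i$. This exhibits every $f \in G^{\lor}$ as a sum of the type required by Proposition \ref{xistructure}, so $\mathrm{Ker}\ e = G^{\lor}$; the corollary to Proposition \ref{descriptor} then yields $\Sh^2(\Lambda) = 0$ and hence $\PHNP_K(L_1, \ldots, L_n)$.

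The hard part won't actually be hard: the entire content is the observation that the direct product hypothesis on $G$ makes the obstruction encoded in Proposition \ref{xistructure} disappear coordinate by coordinate. The only subtlety to verify carefully is the compatibility of the Galois-theoretic identification $G^{(i)} \cong \prod_{j \neq i} G_j$ with the kernel of the projection $\pi_i$, which is immediate from the Galois correspondence and the independence of the $L_i$.
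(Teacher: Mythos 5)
Your proposal is correct and follows essentially the same route as the paper: the paper likewise defines $f_i(g_1,\ldots,g_n) = f(0,\ldots,0,g_i,0,\ldots,0)$ (your $f_i\circ\pi_i$), verifies $f_i|_{G^{(i)}}\equiv 0$ and $f=\sum_i f_i$, and concludes $\mathrm{Ker}\,e = H^2(G,\mathbb{Z})$ so that Proposition \ref{descriptor} (via its corollary) gives $\Sh^2(\Lambda)=0$. No substantive difference.
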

\begin{proof}
    Since $G = \prod_i G_i$, each element of $G$ can be represented as $(g_1, g_2, \ldots, g_n)$ such that $g_i \in G_i$ for each $i$ and $G_i = \{(g_1 ,g_2, \ldots, g_n) : \forall j \neq i, g_j = 0\}$. For any choice of $f \in G^{\lor}$, choosing $f_i$ such that $f_i(g_1, g_2, \ldots, g_n) = f(0, \ldots, 0, g_i, 0, \ldots, 0)$ satisfies the conditions on $f_i$ and $f = \sum_i f_i$ given in Proposition \ref{xistructure}.
    
    Thus $f \in \mathrm{Ker}\ e$. Hence $H^2(G, \mathbb{Z}) \subseteq \mathrm{Ker}\ e \subseteq H^2(G, \mathbb{Z})$, so $\mathrm{Ker}\ e = H^2(G, \mathbb{Z})$. Applying Proposition \ref{descriptor}, it follows that $\PHNP$ holds.
\end{proof}

\section{Counterexample to \texorpdfstring{$\HNP \Longrightarrow \PHNP$}{HNP->PHNP}}
\label{sec:counterexamplehnptophnp}
In this section, we construct a counterexample to the claim that $\bigwedge_i \HNP_K(L_i) \Longrightarrow \PHNP_K(L_1, L_2, \ldots, L_n)$, showing that neither $\PHNP$ or $\HNP$ directly implies the other. We provide an explicit example.

\begin{example}
    Let $L$ be the extension of $K = \mathbb{Q}$ generated by the roots of $x^4-x+1$. It is known that that $\Gal(L/\mathbb{Q}) \cong S_4$ (from \cite{lmfdb:s4}). Let $H_1 = \langle (12) \rangle$ and $H_2 = \langle (1234) \rangle$ be subgroups of $\Gal(L/\mathbb{Q})$ and let $L_1, L_2$ be the subfields of $L$ fixed by these two subgroups respectively. From LMFDB, we know that the only ramified prime of $L$ is $229$, whose decomposition group is $\mathbb{Z}/2\mathbb{Z}$. 



    Thus all decomposition groups of $\Gal(L/\mathbb{Q})$ are cyclic. It can be furthermore verified that $\HNP$ is satisfied in $L_1$ and $L_2$ respectively.
    
    Now, consider Proposition \ref{descriptor}, which states that $|\Sh^2(\Lambda)| = \frac{|H^2(\mathbb{Z})'|}{|\mathrm{Ker}\ e|}$. The denominator $\mathrm{Ker}\ e$ can be computed through Proposition \ref{xistructure}: for any $g = aba^{-1}b^{-1} \in G^{\mathrm{der}}$, we have $f(g) = f(a)f(b)f(a^{-1})f(b^{-1}) = 0$ for $f: G \to \mathbb{Q}/\mathbb{Z}$. 
    
    Since the commutator subgroup of $S_4$ is $A_4$, it follows that the only two maps $f$ in $H^2(\mathbb{Z})$ are $f \equiv 0$ and 
        \[f = \begin{cases} 0 & g \in A_4 \\ \frac{1}{2} & \mathrm{otherwise.} \end{cases}\]

    However, since $G^{(1)} = \mathrm{Gal}(L/L_1) = H_1$ and $G^{(2)} = \mathrm{Gal}(L/L_2) = H_2$, the latter map is nonzero over both $H_1$ and $H_2$, so it follows that the only choice for $f_1$ and $f_2$ is the trivial map. Thus, $\mathrm{Ker}\ e$ is trivial.

    Next, we show that $H^2(\mathbb{Z})'$ contains at least two elements, which would imply that $\Sh^2(\Lambda)$ is nontrivial and thus that $\PHNP_{\mathbb{Q}}(L_1, L_2)$ is false. 

    It suffices to show that $f$ satisfies the conditions of Proposition \ref{numerator}. First, notice that no cyclic subgroup of $S_4$ intersects both $\langle (1234) \rangle$ and $\langle (12) \rangle$, so for each decomposition group $D$, it follows that we can solve $f|_D = f_1 + f_2$ by choosing $\{f_1, f_2\} = \{0, f|_D\}$ depending on whether $D^{(1)}$ or $D^{(2)}$ is trivial. Hence $f \in \mathrm{Ker}\ e_D$ for all decomposition groups $D$ so $f \in H^2(\mathbb{Z})'$ as desired.
\end{example}

We now know that $\HNP \Longrightarrow \PHNP$ holds for independent Galois field extensions but does not hold in the non-Galois case. It remains to be shown whether it is true that $\HNP$ implies $\PHNP$ when $L_1, L_2, \ldots, L_n$ are non-independent Galois field extensions. We give a few partial results in this direction.

\section{Studying \texorpdfstring{$\HNP \Longrightarrow \PHNP$}{HNP->PHNP} in the Galois Case}
\label{sec:galois}
We proceed to derive a set of sufficient conditions that allow us to prove $\HNP \Longrightarrow \PHNP$ using only the Galois groups of the fields $L_1, L_2, \ldots, L_n$. In this section, we exclusively consider the case where each $L_i/K$ is a Galois extension.

\begin{prop}
    If for any $f \in G^{\lor}$, there exists functions $f_i \in G^{\lor}$ for each $i$ such that $f_i |_{G^{(i)}} \equiv 0$ and such that $f = \sum_i f_i$, then $\bigwedge_i \HNP_K(L_i) \Longrightarrow \PHNP_K(L_1, L_2, \ldots, L_n)$.
    \label{weaker}
\end{prop}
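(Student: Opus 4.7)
The plan is to observe that the hypothesis of the proposition is exactly a restatement of the condition $\mathrm{Ker}\ e = H^2(G, \mathbb{Z})$, and then invoke the corollary to Proposition \ref{descriptor}. Recall that Proposition \ref{xistructure} provides the explicit description
\[\mathrm{Ker}\ e \;=\; \left\lbrace f \in G^{\lor} : \forall i,\ \exists f_i \in G^{\lor},\ f_i|_{G^{(i)}} \equiv 0,\ f = \sum_i f_i \right\rbrace \;\subseteq\; H^2(G, \mathbb{Z}) \cong G^{\lor}.\]
The hypothesis of the present proposition asserts precisely that every $f \in G^{\lor}$ admits such a decomposition, so $\mathrm{Ker}\ e \supseteq G^{\lor}$. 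Combined with the trivial reverse inclusion, this gives $\mathrm{Ker}\ e = H^2(G, \mathbb{Z})$.

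Having identified the kernel in this way, I would then apply the corollary immediately following Proposition \ref{descriptor}, which states that the equality $\mathrm{Ker}\ e = H^2(G, \mathbb{Z}) \cong G^{\lor}$ is sufficient to conclude $\bigwedge_i \HNP_K(L_i) \Longrightarrow \PHNP_K(L_1, L_2, \ldots, L_n)$. This completes the argument.

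In effect, there is no substantive obstacle: the proposition is a direct rephrasing of the corollary, with $\mathrm{Ker}\ e$ unfolded into the explicit character-theoretic form from Proposition \ref{xistructure}. All the real work — namely, establishing the formula $|\Sh^2(\Lambda)| = |H^2(\mathbb{Z})'|/|\mathrm{Ker}\ e|$ and identifying $\mathrm{Ker}\ e$ with the set of characters splittable across the $G^{(i)}$ — has already been done. The present statement simply isolates the cleanest sufficient condition one can extract at the level of characters of $G$ and the subgroups $G^{(i)}$, which will then be convenient for the case analyses on abelian, dihedral, and order-$p^3$ Galois groups in the sequel.
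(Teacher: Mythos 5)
Your proposal is correct and follows essentially the same route as the paper: it identifies the hypothesis with the equality $\mathrm{Ker}\ e = H^2(\mathbb{Z})$ via Proposition \ref{xistructure} and then concludes through Proposition \ref{descriptor} (the paper inlines the bound $|\Sh^2(\Lambda)| = |H^2(\mathbb{Z})'|/|\mathrm{Ker}\ e| \leq 1$ rather than citing the corollary, but the content is identical). No gaps.
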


\begin{proof}

First, recall from Proposition \ref{xistructure} that $$\mathrm{Ker}\ e = \{f \in G^{\lor} : \forall i, \exists f_i \in G^{\lor}, f_i|_{G^{(i)}} \equiv 0, f = \sum_i f_i \}.$$

If it is true that $\mathrm{Ker}\ e \cong H^2(\mathbb{Z})$, then Proposition \ref{descriptor} implies that $\displaystyle |\Sh^2(\Lambda)| = \frac{|H^2(\mathbb{Z})'|}{|\mathrm{Ker}\ e|} \leq \frac{|H^2(\mathbb{Z})|}{|\mathrm{Ker}\ e|} = 1$, so $\PHNP$ holds. \end{proof}

This weaker condition for $\PHNP$ allows us to study $\HNP \Longrightarrow \PHNP$ using only the structure of the Galois group $G$. The following proposition gives us an algorithmic approach to verify the conditions of Proposition \ref{weaker} in $G$.

\begin{prop}
    If for any $g \in G$ and $1 \leq i \leq n$, we have that $gG^{(i)} \subseteq G^{\mathrm{der}}G^{(i)} \Longrightarrow g \in G^{\mathrm{der}}$, then it follows that $\bigwedge_i \HNP_K(L_i) \Longrightarrow \PHNP_K(L_1, L_2, \ldots, L_n)$. Equivalently, if $\bigcap_i G^{\mathrm{der}}G^{(i)} = G^{\mathrm{der}}$, then it follows that $\bigwedge_i \HNP_K(L_i) \Longrightarrow \PHNP_K(L_1, L_2, \ldots, L_n)$. 
    \label{concretederived}
\end{prop}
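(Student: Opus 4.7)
The plan is to deduce this from Proposition \ref{weaker} via Pontryagin duality, translating the statement about decomposing characters of $G$ into a statement about intersections of subgroups of the abelianization.

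First, I would observe that every $f \in G^\vee = \mathrm{Hom}(G, \mathbb{Q}/\mathbb{Z})$ is automatically trivial on the commutator subgroup $G^{\mathrm{der}}$, so $G^\vee$ is canonically identified with $A^\vee$ where $A := G^{\mathrm{ab}} = G/G^{\mathrm{der}}$. Under this identification, a character $f_i \in G^\vee$ satisfies $f_i|_{G^{(i)}} \equiv 0$ if and only if the corresponding character of $A$ vanishes on the image $A_i := G^{\mathrm{der}} G^{(i)}/G^{\mathrm{der}}$ of $G^{(i)}$ in $A$; equivalently, $f_i$ lies in the annihilator $A_i^\perp \subseteq A^\vee$. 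So by Proposition \ref{weaker}, it suffices to prove that
\[
\sum_{i=1}^n A_i^\perp \;=\; A^\vee.
\]

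Next I would invoke the standard Pontryagin duality fact for finite abelian groups: for subgroups $A_1, \ldots, A_n \subseteq A$, one has $\sum_i A_i^\perp = A^\vee$ if and only if $\bigcap_i A_i = 0$ in $A$. Applied here, this reduces the whole problem to verifying $\bigcap_i A_i = 0$. Unraveling the definition of $A_i$, this intersection equals $\bigl(\bigcap_i G^{\mathrm{der}} G^{(i)}\bigr)/G^{\mathrm{der}}$, so triviality is equivalent to $\bigcap_i G^{\mathrm{der}} G^{(i)} = G^{\mathrm{der}}$, which is precisely the hypothesis in the second formulation.

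Finally, I would check that the two formulations of the hypothesis agree: the condition $g G^{(i)} \subseteq G^{\mathrm{der}} G^{(i)}$ is the same as $g \in G^{\mathrm{der}} G^{(i)}$, so requiring this implication for all $i$ simultaneously is exactly the inclusion $\bigcap_i G^{\mathrm{der}} G^{(i)} \subseteq G^{\mathrm{der}}$, and the reverse inclusion holds trivially. The main content of the argument is really the Pontryagin duality step; I do not expect a genuine obstacle here, since the finiteness of $G$ makes the duality between sums of annihilators and intersections of subgroups completely routine. The bookkeeping to do is just confirming that the description of $\mathrm{Ker}\, e$ in Proposition \ref{xistructure} really reduces, after passing to $G^{\mathrm{ab}}$, to the statement $\sum_i A_i^\perp = A^\vee$.
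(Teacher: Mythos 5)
Your proposal is correct and takes essentially the same route as the paper: both reduce the statement to Proposition \ref{weaker} via the description of $\mathrm{Ker}\,e$ in Proposition \ref{xistructure}, and both then carry out a Pontryagin duality argument on $G^{\mathrm{ab}}$. The only cosmetic difference is that you invoke the annihilator identity $\sum_i A_i^{\perp} = \bigl(\bigcap_i A_i\bigr)^{\perp}$ directly, whereas the paper dualizes the injection $G^{\mathrm{ab}} \to \prod_i (G/G^{(i)})^{\mathrm{ab}}$ into surjectivity of the sum map $\chi$ — the same duality statement in different packaging (and your version incidentally never needs normality of the $G^{(i)}$).
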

\begin{proof}
    Note that $G^{(i)} \unlhd G$ for every $i$ from the Galois condition. We have the equality $(G/G^{(i)})^{\mathrm{der}} = G^{\mathrm{der}}G^{(i)}/G^{(i)}$ since for any $g_1, g_2, \in G$, $$g_1G^{(i)}g_2G^{(i)}(g_1G^{(i)})^{-1}(g_2G^{(i)})^{-1} = (g_1g_2g_1^{-1}g_2^{-1})G^{(i)}.$$
    
    Let $\alpha$ be the homomorphism from $G^{\mathrm{ab}}$ to $(G/G^{(1)})^{\mathrm{ab}} \times (G/G^{(2)})^{\mathrm{ab}} \times \cdots \times (G/G^{(n)})^{\mathrm{ab}}$ obtained by mapping each $g \in G^{\mathrm{ab}}$ to the coset of $G^{\mathrm{ab}}/G^{(i)} \cong (G/G^{(i)})^{\mathrm{ab}}$ containing it. Here, $G^{\mathrm{der}}G^{(i)}$ is exactly the kernel of the abelianization map from $G/G^{(i)}$ to $(G/G^{(i)})^{\mathrm{ab}}$.
    Now, $\alpha$ is injective if and only if there does not exist a nonzero element $g \in G^{\mathrm{ab}}$ such that $$\alpha(g) = 0 \Longleftrightarrow \forall i, gG^{(i)} \subseteq G^{\mathrm{der}}G^{(i)}.$$

    Note that the existence of such an element is equivalent to the conditions given in the proposition, so $\alpha$ is injective.

    Since $(G^{\mathrm{ab}})^{\lor} \cong ((G^{\lor})^{\lor})^{\lor} \cong G^{\lor}$, consider
    the dual map $\chi : (G/G^{(1)} \times G/G^{(2)} \times \cdots \times G/G^{(n)})^{\lor} \cong (G/G^{(1)})^{\lor} \times (G/G^{(2)})^{\lor} \times \cdots \times (G/G^{(n)})^{\lor} \to G^{\lor}$ of $\alpha$, where the isomorphism is defined by mapping $f(x_1, x_2, \ldots, x_n) \in (G/G^{(1)} \times G/G^{(2)} \times \cdots \times G/G^{(n)})^{\lor}$ to $(f_1, f_2, \ldots, f_n)$ such that $f_i = f(1, 1, \ldots, x_i, \ldots, 1, 1) \in (G/G^{(i)})^{\lor}$ for each $i$. Since $(G^{\mathrm{ab}})^{\mathrm{der}} = 0$, $\chi$ is surjective if and only if $\alpha$ is injective.

    Now, it follows for our description of the isomorphism that $\chi$ is the sum map, defined by mapping $(f_1, f_2, \ldots, f_n) \in (G/G^{(1)})^{\lor} \times (G/G^{(2)})^{\lor} \times \cdots \times (G/G^{(n)})^{\lor}$ to $f \in G^{\lor}$ such that for every $g \in G$, we define $f(g) = \sum f_i(g_i)$ where $(g_1, g_2, \ldots, g_n) = \alpha(g)$. 
    
    Thus, it follows that $\mathrm{Im}\,\chi$ is exactly $\mathrm{Ker}\,e$, so since $\chi$ is surjective into $G^{\lor} \cong H^2(\mathbb{Z})$, it follows from Proposition \ref{weaker} that $\bigwedge_i \HNP_K(L_i) \Longrightarrow \PHNP_K(L_1, L_2, \ldots, L_n)$ as desired. \end{proof}

We use this result to prove that $\HNP \Longrightarrow \PHNP$ in several classes of $G$, including when $G$ is abelian or dihedral.

\begin{theorem}
    If $L_i/K$ is Galois and abelian for each $i$, then $\bigwedge_i \HNP_K(L_i) \Longrightarrow \PHNP_K(L_1, L_2, \ldots, L_n)$.
    \label{abelianfinish}
\end{theorem}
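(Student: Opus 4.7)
The plan is to derive Theorem \ref{abelianfinish} as a short consequence of Proposition \ref{concretederived} together with Lemma \ref{separatedk}. By Proposition \ref{concretederived} it suffices to establish the purely group-theoretic identity $\bigcap_i G^{\mathrm{der}} G^{(i)} = G^{\mathrm{der}}$. Since each $L_i/K$ is Galois, each $G^{(i)}$ is a normal subgroup of $G$; since each $L_i/K$ is additionally abelian, the quotient $G/G^{(i)} \cong G_i$ is abelian, which is equivalent to the inclusion $G^{\mathrm{der}} \subseteq G^{(i)}$ for every $i$.

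From these two observations I would then chain inclusions. First, $G^{\mathrm{der}} \subseteq G^{(i)}$ for all $i$ yields $G^{\mathrm{der}} \subseteq \bigcap_i G^{(i)}$; by Lemma \ref{separatedk}, $\bigcap_i G^{(i)} = \{1\}$, so in fact $G^{\mathrm{der}} = \{1\}$ and $G$ itself is abelian (this is exactly the classical fact that a compositum of abelian extensions is abelian). Now the condition of Proposition \ref{concretederived} is immediate: $G^{\mathrm{der}} G^{(i)} = G^{(i)}$ for each $i$, hence $\bigcap_i G^{\mathrm{der}} G^{(i)} = \bigcap_i G^{(i)} = \{1\} = G^{\mathrm{der}}$, so Proposition \ref{concretederived} applies and $\bigwedge_i \HNP_K(L_i) \Longrightarrow \PHNP_K(L_1, \ldots, L_n)$.

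There is no serious obstacle: the only nontrivial content is recognizing that the minimality of $L$ (built into our definition and used through Lemma \ref{separatedk}) forces $G$ to be abelian once all the $G_i$ are abelian, after which the hypothesis of Proposition \ref{concretederived} collapses. If I wanted to avoid relying on Proposition \ref{concretederived} as a black box, I could instead unwind the argument via Proposition \ref{xistructure}: when $G$ is abelian, $G^{\lor} \cong \mathrm{Hom}(G,\mathbb{Q}/\mathbb{Z})$ splits compatibly with the quotient maps $G \twoheadrightarrow G/G^{(i)}$ because $\bigcap_i G^{(i)} = 0$, so every $f \in G^{\lor}$ can be written as $\sum_i f_i$ with $f_i|_{G^{(i)}} \equiv 0$, giving $\mathrm{Ker}\, e = H^2(G,\mathbb{Z})$ directly and hence $\Sh^2(\Lambda) = 0$ via Proposition \ref{descriptor}. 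Either way, the argument is a one-paragraph combination of Lemma \ref{separatedk} with the abelianness hypothesis.
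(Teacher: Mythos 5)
Your proposal is correct and follows essentially the same route as the paper: both reduce to the group-theoretic condition of Proposition \ref{concretederived}, deduce that $G$ is abelian (the paper via the embedding $G \hookrightarrow \prod_i G/G^{(i)}$, you via $G^{\mathrm{der}} \subseteq \bigcap_i G^{(i)}$ — the same fact phrased dually), and then invoke Lemma \ref{separatedk} to conclude $\bigcap_i G^{\mathrm{der}}G^{(i)} = \bigcap_i G^{(i)} = \{1\} = G^{\mathrm{der}}$.
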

\begin{proof}
For any abelian group $A$, we have $A^{\mathrm{der}} = 0$ and $A^{\mathrm{ab}} \cong A$. Since $G$ directly embeds into $\prod_i G/G^{(i)}$, which must be abelian, it follows that $G$ is abelian and thus $G^{\mathrm{der}} = 0$. Therefore by Proposition \ref{concretederived}, it suffices to show that there is no nontrivial $g \in G$ such that $g \in \bigcap_i G^{(i)}$. This follows from Lemma \ref{separatedk}.
\end{proof}

This result immediately gives us an analogue of the Hasse norm theorem for the projective Hasse norm principle:
\begin{cor}
    If $L_1, L_2, \ldots, L_n$ are cyclic Galois field extensions, then it follows that $\PHNP_K(L_1, L_2, \ldots, L_n)$ holds.
\end{cor}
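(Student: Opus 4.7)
The plan is to reduce this immediately to the two main ingredients established just above: the classical Hasse Norm Theorem for cyclic extensions, and Theorem \ref{abelianfinish} which states that $\HNP$ in each (Galois, abelian) constituent field implies $\PHNP$ for the collection.

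First I would observe that every cyclic extension is in particular Galois and abelian, so the hypotheses of Theorem \ref{abelianfinish} are met by $L_1,\dots, L_n$. Second, I would invoke the Hasse Norm Theorem (stated at the beginning of \S\ref{sec:statement}) to conclude that $\HNP_K(L_i)$ holds for each $i$, since each $L_i/K$ is cyclic. This gives $\bigwedge_i \HNP_K(L_i)$. Finally, applying Theorem \ref{abelianfinish} yields $\PHNP_K(L_1,\dots,L_n)$, which is precisely the desired conclusion.

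There is essentially no obstacle here: the work has all been done in the preceding sections. The only thing worth being slightly careful about is the framing, namely that the corollary really is an analogue of the Hasse Norm Theorem in the sense that Hasse's theorem is the case $n=1$ (trivially) or $n=2$ with $L_1=L_2$ (via Proposition \ref{same}), and the corollary extends the conclusion from one cyclic extension to any finite collection of cyclic extensions through the projective formulation. I would thus write the proof in just two or three lines, citing the Hasse Norm Theorem and Theorem \ref{abelianfinish} in sequence.
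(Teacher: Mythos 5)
Your proof is correct and matches the paper's (implicit) argument exactly: the corollary follows from the Hasse Norm Theorem giving $\HNP_K(L_i)$ for each cyclic $L_i$, combined with Theorem \ref{abelianfinish} since cyclic extensions are abelian and Galois. Nothing further is needed.
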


Proposition \ref{concretederived} can also be used to tackle many nonabelian cases. To demonstrate this, we show below that this proposition is sufficient to address the case where $G$ is dihedral.

\begin{theorem}
    If $G$ is dihedral and $L_i/K$ is Galois for each $i$, then $\bigwedge_i \HNP_K(L_i) \Longrightarrow \PHNP_K(L_1, L_2, \ldots, L_n)$.
\end{theorem}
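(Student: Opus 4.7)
The plan is to invoke Proposition \ref{concretederived}, which reduces the conclusion to the purely group-theoretic identity $\bigcap_i G^{\mathrm{der}} G^{(i)} = G^{\mathrm{der}}$. Passing through the abelianization map $\pi\colon G \to G^{\mathrm{ab}}$, this becomes the equivalent statement $\bigcap_i \pi(G^{(i)}) = 0$. By Proposition \ref{outfilter} we may assume each $L_i \neq K$, so no $G^{(i)}$ equals $G$, and Lemma \ref{separatedk} guarantees $\bigcap_i G^{(i)} = 1$. Writing $G = D_m = \langle r, s \mid r^m = s^2 = 1,\ srs = r^{-1}\rangle$, I then split on the parity of $m$.

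The case $m$ odd is immediate: every proper normal subgroup of $D_m$ is contained in $\langle r\rangle = G^{\mathrm{der}}$ (since every reflection is conjugate to every other when $m$ is odd, a normal subgroup containing a reflection must be all of $D_m$), and $G^{\mathrm{ab}} \cong \mathbb{Z}/2$, so each $\pi(G^{(i)}) = 0$ and we are done.

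The main obstacle is $m$ even, where $G^{\mathrm{der}} = \langle r^2\rangle$ and $G^{\mathrm{ab}} \cong \mathbb{Z}/2 \times \mathbb{Z}/2$ with generators $\bar r, \bar s$. I would begin by classifying the proper normal subgroups of $D_m$ as the cyclic $\langle r^d\rangle$ for $d \mid m$ together with the dihedral twins $\langle r^2, s\rangle$ and $\langle r^2, rs\rangle$, and compute their $\pi$-images to be respectively $0$ (for $d$ even), $\{0, \bar r\}$ (for $d$ odd), $\{0, \bar s\}$, and $\{0, \bar r + \bar s\}$. A nonzero $\bigcap_i \pi(G^{(i)})$ would force every image to coincide with a single order-two subgroup of $G^{\mathrm{ab}}$, and I rule out each of the three possibilities. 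If all $\pi(G^{(i)}) = \{0, \bar s\}$ or all $\{0, \bar r + \bar s\}$, the unique normal subgroup with that image forces every $G^{(i)}$ to equal $\langle r^2, s\rangle$ or $\langle r^2, rs\rangle$ respectively, so $\bigcap_i G^{(i)}$ is nontrivial, contradicting Lemma \ref{separatedk}. If all $\pi(G^{(i)}) = \{0, \bar r\}$, then each $G^{(i)} = \langle r^{d_i}\rangle$ with $d_i$ odd, and $\bigcap_i \langle r^{d_i}\rangle = \langle r^{\mathrm{lcm}(d_i)}\rangle$ is nontrivial because $\mathrm{lcm}(d_i)$ is an odd divisor of the even number $m$, hence strictly smaller than $m$.

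The one nontrivial ingredient underlying this case analysis is the uniqueness of $\langle r^2, s\rangle$ (and analogously $\langle r^2, rs\rangle$) as a normal subgroup of $D_m$ with $\pi$-image $\{0, \bar s\}$, which I would establish by noting that conjugation $r(r^{2k}s)r^{-1} = r^{2k+2}s$ forces any normal subgroup containing a single even-indexed reflection to contain all of them, after which the pairwise products $(r^{2j}s)(r^{2k}s) = r^{2(j-k)}$ produce all of $\langle r^2\rangle$. With this uniqueness lemma in hand, the whole argument is essentially bookkeeping on the four-element abelianization, and the result follows from Proposition \ref{concretederived}.
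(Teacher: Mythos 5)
Your proposal is correct and follows essentially the same route as the paper: reduce via Proposition \ref{concretederived} to the group-theoretic statement $\bigcap_i G^{\mathrm{der}}G^{(i)} = G^{\mathrm{der}}$, then run a case analysis in the dihedral group that ends in a contradiction with Lemma \ref{separatedk}, with your pass through the abelianization and the classification of normal subgroups of $D_m$ being only an organizational repackaging of the paper's rotation/reflection element chase. If anything, your treatment of the rotation-type case (intersecting the $\langle r^{d_i}\rangle$ via the lcm of odd divisors of the even $m$) is tighter than the paper's corresponding step, which asserts $r \in G^{(i)}$ where the hypothesis only yields an odd power of $r$ in $G^{\mathrm{der}}G^{(i)}$.
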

\begin{proof}
    Let $G = D_n = \langle r, s : r^n = s^2 = (sr)^2 = 1 \rangle$, then we have that $G^{\mathrm{der}} = \langle r^2 \rangle$. By Proposition \ref{concretederived}, it suffices to show that there does not exist $g \in \bigcap_i G^{\mathrm{der}}G^{(i)}$ such that $g \notin G^{\mathrm{der}}$. 

    Suppose such a $g$ exists. If $g = r^m$ for some $m$ then $n$ is even and $m$ is odd. Furthermore, it follows that $r = gr^{1 - m} \in G^{(i)}$ for all $m$, however this gives contradiction by Lemma \ref{separatedk}. 

    Otherwise, if $g = r^ms$ for some $s$. Then for each $G^{(i)}$, there exists $k$ such that $r^ks \in G^{(i)}$ Since each $G^{(i)} \unlhd G$, it follows that $r^{k+2}s = rr^ksr^{-1} \in G^{(i)}$, so $r^2 \in G^{(i)}$ for all $i$. This gives contradiction by Lemma \ref{separatedk} for all $n \geq 3$. Otherwise, if $n = 1, 2$ then $G$ is abelian, so the result follows from Theorem \ref{abelianfinish} as desired.
\end{proof}

By running the Oscar code in Appendix \ref{secondcode}, we found that in the groups we investigated, the conditions of Proposition \ref{concretederived} held in all groups $G$ whose order is quarticfree. We prove two theorems supporting this observation:

\begin{theorem}
    If $G$ has order $p^3$ for some prime $p$, then we have $\bigwedge_i \HNP_K(L_i) \Longrightarrow \PHNP_K(L_1, L_2, \ldots, L_n)$.
\end{theorem}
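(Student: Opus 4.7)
The plan is to invoke Proposition \ref{concretederived}, which reduces the claim to the purely group-theoretic assertion $\bigcap_i G^{\mathrm{der}}G^{(i)} \subseteq G^{\mathrm{der}}$ (the reverse inclusion being automatic). By Proposition \ref{outfilter} we may assume each $L_i \neq K$, so that $G^{(i)}$ is a \emph{proper} normal subgroup of $G$ for every $i$. If $G$ is abelian the conclusion is immediate from Theorem \ref{abelianfinish}, so the substantive case is when $G$ is a non-abelian group of order $p^3$.

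In that case, standard $p$-group theory gives $|Z(G)| = p$, $G/Z(G) \cong (\mathbb{Z}/p\mathbb{Z})^2$, and $G^{\mathrm{der}} = Z(G)$: the first two follow from the fact that $G/Z(G)$ cannot be cyclic and nontrivial, and the third from $G/Z(G)$ being abelian together with $G^{\mathrm{der}} \neq \{1\}$. Moreover, every maximal subgroup of $G$ has order $p^2$ and contains $Z(G)$ (otherwise $Z(G)$ and the maximal subgroup would intersect trivially and generate $G$, making $G$ abelian), and by the subgroup correspondence there are exactly $p+1$ such maximal subgroups. Hence each proper normal subgroup of $G$ is either $\{1\}$, $Z(G)$, or one of these $p+1$ maximal subgroups.

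With this classification in hand, I would split on whether some $G^{(i_0)}$ is contained in $Z(G)$, i.e.\ equals $\{1\}$ or $Z(G)$. If so, then $G^{\mathrm{der}}G^{(i_0)} = Z(G) = G^{\mathrm{der}}$, and this single factor already forces $\bigcap_i G^{\mathrm{der}}G^{(i)} \subseteq G^{\mathrm{der}}$. Otherwise every $G^{(i)}$ is a maximal subgroup of order $p^2$ and therefore contains $Z(G)$, which gives $\bigcap_i G^{(i)} \supseteq Z(G) \neq \{1\}$, contradicting Lemma \ref{separatedk}. The only potential obstacle here is verifying that each of these two alternatives is exhaustive; this is exactly what the normal-subgroup classification for non-abelian $p^3$-groups buys us, after which the proof reduces to a single application of Lemma \ref{separatedk} and no cohomological computation beyond Proposition \ref{concretederived} is needed.
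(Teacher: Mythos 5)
Your proof is correct, and although it leans on the same ingredients as the paper's own proof --- Proposition \ref{concretederived} for the reduction to group theory, Theorem \ref{abelianfinish} for the abelian case, and Lemma \ref{separatedk} for the final contradiction --- your organization of the non-abelian case is genuinely different. The paper argues that (after setting aside trivial $G^{(i)}$) every quotient $G/G^{(i)}$ has order at most $p^2$, hence is abelian, so $G^{\mathrm{der}}\subseteq\bigcap_i G^{(i)}$, contradicting Lemma \ref{separatedk}; that is, the non-abelian configuration is declared vacuous. You instead classify the proper normal subgroups of a non-abelian group of order $p^3$ via the center ($Z(G)=G^{\mathrm{der}}$ of order $p$, maximal subgroups of order $p^2$ all containing $Z(G)$) and split into two exhaustive alternatives: either some $G^{(i_0)}\subseteq Z(G)$, in which case $G^{\mathrm{der}}G^{(i_0)}=G^{\mathrm{der}}$ and the hypothesis of Proposition \ref{concretederived} holds outright, or every $G^{(i)}$ contains $Z(G)$, violating Lemma \ref{separatedk}. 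This buys you something concrete: the sub-case $G^{(i)}=\{1\}$, i.e.\ $L_i=L$ itself, is genuinely possible, and the paper disposes of it with the inaccurate claim that a trivial $G^{(i)}$ forces $L_i=K$ and can be stripped off by Proposition \ref{outfilter} (a trivial $G^{(i)}$ in fact means $L_i=L$, which that proposition does not remove); your first alternative is exactly what repairs that step. Conversely, the paper's argument is shorter where it applies, needing only that groups of order at most $p^2$ are abelian together with the minimality of $G^{\mathrm{der}}$, and you could compress your own case analysis similarly: since any nontrivial normal subgroup of a $p$-group meets the center and $|Z(G)|=p$, every nontrivial $G^{(i)}$ already contains $Z(G)$, so the classification and the count of the $p+1$ maximal subgroups are not actually needed.
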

\begin{proof}
    If $G$ is abelian, then the result follows immediately from Theorem \ref{abelianfinish}. Otherwise, it follows that $G^{\mathrm{der}}$ is a nontrivial normal subgroup of $G$. If any of $G^{(i)}$ are trivial, it follows that $L_i = K$ and can be removed using Proposition \ref{outfilter}. Now for each $i$, we have that $G^{(i)}$ is a normal subgroup of $G$, so since $|G/G^{(i)}| \in \{1, p, p^2\}$ for all $i$, so it follows that $G/G^{(i)}$ is an abelian group. However by definition, $G^{\mathrm{der}}$ is the minimal subgroup $G'$ of $G$ such that $G/G'$ is abelian. Thus, it follows that $G^{\mathrm{der}} \subseteq G^{(i)}$ for all $i$. However, since $G^{\mathrm{der}}$ is nontrivial, this contradicts Lemma \ref{separatedk}, finishing.
\end{proof}

\begin{prop}
    Suppose $G \cong \mathbb{Z}/p\mathbb{Z} \times H$ where $H$ is a nonabelian group of order $p^3$. Then there exists a choice of $G^{(1)}, G^{(2)}$ such that $G^{\mathrm{der}}G^{(1)} \cap G^{\mathrm{der}}G^{(2)} \neq G^{\mathrm{der}}$.
\end{prop}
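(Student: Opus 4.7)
The plan is to exhibit explicit $G^{(1)}, G^{(2)} \leq G$ that are compatible with Lemma \ref{separatedk} (so $G^{(1)} \cap G^{(2)} = 1$) yet whose images in $G^{\mathrm{ab}} = G/G^{\mathrm{der}}$ have a common nontrivial element; equivalently $G^{\mathrm{der}}G^{(1)} \cap G^{\mathrm{der}}G^{(2)}$ strictly contains $G^{\mathrm{der}}$. I first record the structural input: because $H$ is a nonabelian group of order $p^3$, $Z(H)$ has order $p$ (otherwise $H/Z(H)$ would be cyclic and $H$ abelian), and since $H/Z(H)$ has order $p^2$ it is abelian, forcing $H^{\mathrm{der}} \subseteq Z(H)$; nontriviality of $H^{\mathrm{der}}$ then gives $H^{\mathrm{der}} = Z(H)$. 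Writing $A = \mathbb{Z}/p\mathbb{Z}$ with generator $t$ and fixing a generator $z$ of $Z(H)$, we have $G^{\mathrm{der}} = \{1\} \times \langle z \rangle$, cyclic of order $p$.

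Next I would take
\[G^{(1)} := \langle (t, 1) \rangle, \qquad G^{(2)} := \langle (t, z) \rangle,\]
each cyclic of order $p$. To check $G^{(1)} \cap G^{(2)} = \{1\}$, an element common to both has the form $(t^k, 1) = (t^l, z^l)$ for some $0 \leq k, l < p$; then $z^l = 1$ forces $l \equiv 0 \pmod{p}$, and then $k = 0$. Thus these subgroups satisfy $\bigcap_i G^{(i)} = \{1\}$, which is the group-theoretic condition Lemma \ref{separatedk} imposes on a pair $\Gal(L/L_1), \Gal(L/L_2)$ inside $G = \Gal(L/K)$.

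The key observation is then that $(t,1)$ and $(t,z)$ differ by the element $(1, z^{-1}) \in G^{\mathrm{der}}$, so $(t,1) \in G^{\mathrm{der}} G^{(2)}$, while trivially $(t,1) \in G^{\mathrm{der}} G^{(1)}$; yet $(t, 1) \notin G^{\mathrm{der}}$ because its first coordinate is the nontrivial element $t \in A$. Hence $(t, 1)$ is an element of $G^{\mathrm{der}}G^{(1)} \cap G^{\mathrm{der}}G^{(2)}$ lying outside $G^{\mathrm{der}}$, and the containment is strict as required.

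I do not expect a genuine obstacle here; the only content is in choosing the right pair. The conceptual point is that $(t,1)$ and $(t,z)$ lie in the same coset modulo $G^{\mathrm{der}}$ (they differ by a central element of $H$), yet as cyclic order $p$ subgroups with differing $H$-components they meet only in the identity. This single construction works uniformly for all four cases of $H$ — the Heisenberg and non-Heisenberg families for $p$ odd and both $D_4$ and $Q_8$ for $p = 2$ — since nothing beyond $|Z(H)| = p$ and $Z(H) = H^{\mathrm{der}}$ is used.
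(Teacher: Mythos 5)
Your proposal is correct and follows essentially the same route as the paper: the paper likewise identifies $G^{\mathrm{der}} = H^{\mathrm{der}}$ as the order-$p$ central subgroup of $H$ and chooses $G^{(1)} = \langle \alpha\rangle$, $G^{(2)} = \langle \alpha\beta\rangle$ (your $\langle (t,1)\rangle$ and $\langle (t,z)\rangle$), noting $G^{\mathrm{der}}G^{(1)} = G^{\mathrm{der}}G^{(2)} = \langle\alpha,\beta\rangle \neq \langle\beta\rangle = G^{\mathrm{der}}$. Your write-up just verifies the trivial intersection and the strict containment a bit more explicitly than the paper does.
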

\begin{proof}
    Any $p$-group has a nontrivial center (by the class equation), so $H$ must have a normal subgroup $H'$ of order $p$. Since $H/H'$ is abelian, it follows that, $H^{\mathrm{der}} = H' \cong \mathbb{Z}/p\mathbb{Z}$. Suppose $G$ is isomorphic to $H \times \langle \alpha \rangle$ where $\alpha$ has order $p$, and let $\beta$ be a generator of $H^{\mathrm{der}}$. Now, choose $G^{(1)} = \langle \alpha \rangle, G^{(2)} = \langle \alpha \beta \rangle$. We have that $G^{(1)} \cap G^{(2)}$ is trivial and $G^{\mathrm{der}}G^{(1)} = G^{\mathrm{der}}G^{(2)} = \langle \alpha, \beta \rangle$. However, $G^{\mathrm{der}} = \langle \beta \rangle \not \cong \langle \alpha, \beta \rangle$, as desired.
\end{proof}

The author hopes that determining the exact groups for which the conditions of Proposition \ref{concretederived} hold may give further insight on whether $\HNP$ implies $\PHNP$ in the general Galois case. In addition, a more general set of conditions utilizing Proposition \ref{numerator} and Proposition \ref{xistructure} may suffice to show the implication.


\section{Acknowledgements}
The authors would like like to thank MIT PRIMES for connecting them and providing the structure making such a project possible in the first place. Many thanks to Tanya Khovanova and Dr.~Felix Gotti for their suggestions and feedback.
\newpage
\appendix
\section{SageMath Code to Test for HNP and PHNP}
\label{code}
\begin{lstlisting}
def TestAll(G, check):
    subg = G.normal_subgroups()
    subg.pop()
    for i in range(len(subg)):
        H1 = subg[i]
        if H1.order() == G.order():
            continue
        L1 = GLattice(H1, 1)
        IL1 = L1.induced_lattice(G)
        for j in range(i, len(subg)):
            H2 = subg[j]
            if H2.order() == G.order():
                continue
            L2 = GLattice(H2, 1)
            IL2 = L2.induced_lattice(G)

            if check:
                P1 = False
                P2 = False
                if len(prime_factors(IL1.rank())) <= 1:
                    P1 = True
                if len(prime_factors(IL2.rank())) <= 1:
                    P2 = True
                if P1 and P2:
                    continue

            if H1.group_id() == H2.group_id():
                continue

            IL = IL1.direct_sum(IL2)
            SL = IL.fixed_sublattice()
            a, b = SL.basis()
            SSL = SL.sublattice([a-b])
            
            QL1 = IL.quotient_lattice(SSL)
            QL2 = IL.quotient_lattice(SL)

            TS1 = QL1.Tate_Shafarevich_lattice(2)
            TS2 = QL2.Tate_Shafarevich_lattice(2)

            if TS1 != TS2:
                print('FOUND!!')
                print(G)
                print(H1)
                print(H2)
                print(TS1)
                print(TS2)
                print('-------------')

def HuntHNP(depth):
    for i in range(1, depth):
        print(i)
        for j in range(1, TransitiveGroups(i).cardinality()+1):
            print(TransitiveGroup(i, j))
            TestAll(TransitiveGroup(i, j), True)
\end{lstlisting}

\section{Oscar Code to Test for the Galois Case}
\label{secondcode}
\begin{lstlisting}[language=Julia,keywordstyle=\bfseries\color{blue},stringstyle=\color{magenta},commentstyle=\color{ForestGreen},showstringspaces = false,basicstyle={\fontsize{9pt}{9pt}\ttfamily},aboveskip=0.3em,belowskip=0.1em]
function test(g,h1,h2)
      d = derived_subgroup(g)[1]
      h1d = sub(g,[gens(h1);gens(d)])[1] 
      h2d = sub(g,[gens(h2);gens(d)])[1]
      i = intersect([h1d,h2d])[1]
      if order(i)>order(d)
           true
      else
           false
      end
end


function testall(g)
      n = normal_subgroups(g)
      for h1 in n
           for h2 in n 
                if order(intersect([h1,h2])[1]) == 1
                     if test(g,h1,h2)
                          print("FOUND")
                     end
                 end
            end
       end
 end
      


function looptestall(n)
     numb = number_small_groups(n)
     for i in 1:numb
          println(i)
          testall(small_group(n,i))
          println(" ")
     end
end


function testv2(g,h1,h2)
      d = derived_subgroup(g)[1]
      q1, f1 = quo(g,h1)
      q2, f2 = quo(g,h2)
      d1 = derived_subgroup(q1)[1]
      d2 = derived_subgroup(q2)[1]
      for i in g
           if !(i in d)
                if (f1(i) in d1)&(f2(i) in d2)
                     return true
                end
            end
      end
      return false
end

function testallv2(g)
      n = normal_subgroups(g)
      l = []
      for h1 in n
           for h2 in n 
                if order(intersect([h1,h2])[1]) == 1
                     if testv2(g,h1,h2)
                          print("FOUND ")
                          append!(l, [[g,h1,h2]])
                     end
                 end
            end
       end
      return l
 end
 
function looptestallv2(n)
     numb = number_small_groups(n)
     for i in 1:numb
          println(i)
          s = testallv2(small_group(n,i))
          for r in s
              println(r)
          end
          println(" ")
     end
end
\end{lstlisting}

\bibliographystyle{amsalpha}
\bibliography{bibliography}
\bigskip

\end{document}